\newtheorem{theorem}{Theorem}[section]
\newtheorem{corollary}[theorem]{Corollary}
\newtheorem{lemma}[theorem]{Lemma}
\newtheorem{proposition}[theorem]{Proposition}
\theoremstyle{definition}
\theoremstyle{remark}
\newtheorem{remark}[theorem]{Remark}
\newtheorem{example}[theorem]{Example}
\numberwithin{equation}{section}
\newcommand{\F}{\mathbb F}
\newcommand{\G}{\mathbb G}
\newcommand{\A}{\mathbb A}
\newcommand{\bfs}{\boldsymbol}
\newcommand{\fq}{\F_{\hskip-0.7mm q}}
\newcommand{\cfq}{\overline{\F}_{\hskip-0.7mm q}}
\def\ifm#1#2{\relax \ifmmode#1\else#2\fi}
\newcommand{\klk}    {\ifm {,\ldots,} {$,\ldots,$}}
\begin{document}

\title[]{Entropy approximations of algebraic matroids over finite fields}%
\author[G. Matera]{
Guillermo Matera${}^{1,2}$}%
\address{${}^{1}$ National Council of Science and Technology (CONICET),
Ar\-gentina}
\address{${}^{2}$Instituto del Desarrollo Humano,
Universidad Nacional de Gene\-ral Sarmiento, J.M. Guti\'errez 1150
(B1613GSX) Los Polvorines, Buenos Aires, Argentina}
\email{\{gmatera\}@ungs.edu.ar}

\thanks{The author is partially supported by the grant
PIP CONICET 11220130100598}%
\subjclass{05B35 (Primary); 94A15, 14G15, 11G25 (Secondary)}%
\keywords{Algebraic matroids; Entropic polymatroids; Shannon
entropy; Algebraic geometry; Lang-Weil estimates; Secret sharing;
Network coding.
}%

%\date{}%
%\dedicatory{}%
%\commby{}%
% ----------------------------------------------------------------
\begin{abstract}
We investigate the asymptotic behavior of entropy polymatroids
associated with algebraic matroids over finite fields. Given an
algebraic matroid ${\sf M}:=(\mathcal{E},r)$ and the irreducible variety $V$
associated with ${\sf M}$, we consider the polymatroid $h_\F$ induced by the
entropies of the projections of $V(\F)$, where $\F$ is a finite extension
of $\G$. Revisiting a construction of Mat\'u$\check{\mathrm{s}}$, we show that
the polymatroid $h_\F$ converges to the rank function $r$ of ${\sf M}$
as $q:=|\F|$ tends to infinity.

Our main contribution is to make this convergence quantitative: we
derive explicit uniform error bounds for the deviation $|h_\F-r|$,
expressed in terms of the degree of $V$, the ground set size
$|\mathcal{E}|$, the rank $r(\mathcal{E})$, and $q$. The proofs
combine tools of algebraic geometry (effective Lang-Weil estimates
and intrinsic degree bounds for annihilating polynomials of
circuits) with information-theoretic arguments (submodularity of
entropy and conditional entropy estimates).

These results provide the first effective and uniform approximation bounds
for algebraic matroids by entropy polymatroids, clarifying the quantitative link
between algebraic independence (captured by matroid rank) and information-theoretic
independence (captured by entropy).
\end{abstract}
\maketitle

\section{Introduction}

Matroid theory provides a unifying framework for combinatorial
structures arising in graph theory, geometry, and optimization.
Among the various classes of matroids, \emph{algebraic matroids}
occupy a distinguished position. They arise naturally from sets of
algebraically dependent elements in a field extension and serve as
a bridge between combinatorial independence and algebraic
geometry. Classical references such as \cite{Oxley11} (see also
\cite{RoSiTh20}) emphasize their central role both in combinatorics and in
applications to algebraic geometry.

An important line of research initiated by
Mat\'u$\check{\textrm{s}}$ \cite{Matus99,Matus24} (see also
\cite{BenEfraim16}) concerns the relationship between algebraic
matroids and \emph{entropic matroids}. In particular,
Mat\'u$\check{\textrm{s}}$ showed that every algebraic matroid is
\emph{almost entropic}, meaning that its rank function can be
approximated by entropic polymatroids. This result establishes a
deep connection between information theory and algebraic
combinatorics. While the original proof is qualitative in nature, it
leaves open the problem of obtaining explicit quantitative bounds—
a question that is the focus of the present work.

A quantitative refinement of Mat\'u$\check{\textrm{s}}$'s theorem is
important not only from a conceptual perspective, but also for
potential applications. Explicit bounds on the rate at which
entropic polymatroids approximate the rank function of an algebraic
matroid are relevant, for instance, in the design of
information-theoretic protocols such as polynomial secret sharing
schemes \cite{Shamir79, Beimel11}, where sequences of entropic
inequalities allow one to construct secret sharing schemes with
information ratio tending to one (see \cite{BeFaMo25} for details).
At the same time, quantitative approximations play a key role in
network coding \cite{Yeung08}, where entropic polymatroids provide
outer bounds for feasible rate regions.

The purpose of this paper is to present a refined and quantitative
version of Mat\'u$\check{\textrm{s}}$'s theorem. As shown
in \cite{Matus24}, it suffices to consider the case of algebraic
matroids over a finite field. Our main
contribution is an explicit approximation scheme showing that
algebraic matroids over a finite field are almost entropic, with the error
bounded in terms of geometric invariants of the associated algebraic
variety. More precisely, we obtain the following result.
\begin{theorem}\label{th: main intro}
Let ${\sf M}:=(\mathcal{E},r)$ be an algebraic matroid defined
over a finite field $\G$, where $\mathcal{E}$ is its ground set and $r$ is the
corresponding rank function. Let $V\subset\A^n_{\overline{\G}}$ be
the variety associated with ${\sf M}$, and let $\F$ be a
finite field extension of $\G$ with $|\F|\ge (5\deg(V)^{7/3}+1)$.
For an arbitrary
subset $\mathcal{K}\subseteq \mathcal{E}$, with $n:=|\mathcal{E}|$,
$m:=r(\mathcal{E})$, $\delta:=\deg(V)$ and $q:=|\F|$, we have
$$|h_\F(\mathcal{K}) - r(\mathcal{K})|\le
\frac{nm\delta^2}{q^{1/2}}+\frac{n\ln\delta}{\ln q}
-\frac{\ln(1-{5\delta^{7/3}}{q^{-1/2}})}{\ln q}.
$$
\end{theorem}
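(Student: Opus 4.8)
The plan is to compute with the definition of $h_\F$ directly. Let $X=(X_1,\dots,X_n)$ be uniformly distributed on $V(\F)$ and, for $\mathcal K\subseteq\mathcal E$, write $X_{\mathcal K}:=(X_i)_{i\in\mathcal K}$, so that $h_\F(\mathcal K)=H(X_{\mathcal K})/\ln q$, where $H$ is the Shannon entropy taken with the natural logarithm. Two standard facts tie this to the matroid: $r(\mathcal K)=\dim\overline{\pi_{\mathcal K}(V)}$, where $\pi_{\mathcal K}\colon\A^n\to\A^{\mathcal K}$ is the coordinate projection; and $\deg\overline{\pi_{\mathcal K}(V)}\le\deg(V)=\delta$, since a linear projection of an irreducible variety does not increase its degree. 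It therefore suffices to sandwich $H(X_{\mathcal K})$ between $r(\mathcal K)\ln q$ shifted up and down by the asserted error, and then divide by $\ln q$.

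For the upper bound, entropy never exceeds the logarithm of the support size, so $H(X_{\mathcal K})\le\ln\bigl|\pi_{\mathcal K}(V(\F))\bigr|\le\ln\bigl|\overline{\pi_{\mathcal K}(V)}(\F)\bigr|$. As $V$, and hence $\overline{\pi_{\mathcal K}(V)}$, is absolutely irreducible of dimension $r(\mathcal K)$ and degree at most $\delta$, the effective Lang-Weil estimate gives $\bigl|\overline{\pi_{\mathcal K}(V)}(\F)\bigr|\le q^{r(\mathcal K)}\bigl(1+5\delta^{7/3}q^{-1/2}\bigr)$; taking logarithms and using $\ln(1+x)\le-\ln(1-x)$ for $0<x<1$ yields $h_\F(\mathcal K)\le r(\mathcal K)-\ln\bigl(1-5\delta^{7/3}q^{-1/2}\bigr)/\ln q$, which already accounts for the last term of the bound.

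The lower bound is the substance. Fix a basis $\mathcal B$ of $\mathcal K$ in ${\sf M}$, extend it to a basis $\mathcal B'=\{i_1,\dots,i_m\}$ of $\mathcal E$ ordered so that $\mathcal B=\{i_1,\dots,i_s\}$ with $s=r(\mathcal K)$, and put $\mathcal B_j:=\{i_1,\dots,i_j\}$. By monotonicity of entropy and the chain rule, $H(X_{\mathcal K})\ge H(X_{\mathcal B})=H(X)-\sum_{j=s+1}^{m}H\bigl(X_{i_j}\mid X_{\mathcal B_{j-1}}\bigr)-H\bigl(X\mid X_{\mathcal B'}\bigr)$. Now $H(X)=\ln|V(\F)|\ge m\ln q+\ln\bigl(1-5\delta^{7/3}q^{-1/2}\bigr)$ by effective Lang-Weil applied to $V$ itself; each $H(X_{i_j}\mid X_{\mathcal B_{j-1}})\le\ln q$ since $X_{i_j}$ takes values in $\F$; and the key point is to bound $H(X\mid X_{\mathcal B'})$. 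Since $\mathcal B'$ is a basis, for every $i\notin\mathcal B'$ the coordinate $x_i$ is annihilated on $V$ by an irreducible polynomial $g_i$ supported on the unique circuit $C_i\subseteq\{i\}\cup\mathcal B'$ through $i$, and the intrinsic degree bound gives $\deg g_i\le\delta$ because $\overline{\pi_{C_i}(V)}$ is a hypersurface of degree at most $\delta$. Outside the exceptional locus $B_i$ — the common zero set, in the space of the variables $(x_j)_{j\in C_i\setminus\{i\}}$, of the coefficients of $g_i$ regarded as a polynomial in $x_i$, a subvariety of codimension $\ge 1$ whose degree is polynomial in $\delta$ — the value of $X_i$ is a root of a nonzero univariate polynomial of degree $\le\delta$, so $H(X_i\mid X_{C_i\setminus\{i\}}=c)\le\ln\delta$; for $c\in B_i$ one uses the trivial bound $\ln q$. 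Hence $H(X_i\mid X_{C_i\setminus\{i\}})\le\ln\delta+P_i\ln q$, where $P_i$ is the probability that $X_{C_i\setminus\{i\}}\in B_i$; since $\pi_{C_i\setminus\{i\}}^{-1}(B_i)\cap V$ is a proper subvariety of $V$ of degree polynomial in $\delta$, an elementary point count (or effective Lang-Weil) against the lower bound $|V(\F)|\ge q^m(1-5\delta^{7/3}q^{-1/2})$ gives $P_i=O(\mathrm{poly}(\delta)\,q^{-1})$. Summing over $i\notin\mathcal B'$, substituting the $m-s$ trivial bounds above, dividing by $\ln q$, and using the hypothesis $q\ge 5\delta^{7/3}+1$ to carry out the numerical simplifications, one obtains $h_\F(\mathcal K)\ge r(\mathcal K)+\ln\bigl(1-5\delta^{7/3}q^{-1/2}\bigr)/\ln q-n\ln\delta/\ln q-nm\delta^2/q^{1/2}$. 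Combining this with the upper bound and with $s\le m\le n$ yields the theorem.

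The main obstacle is the estimate of $H(X\mid X_{\mathcal B'})$: one needs uniform, effective control of the exceptional loci $B_i$ — equivalently of the discriminants and resultants attached to the circuit annihilators — together with a matching effective count of their $\F$-points in $V$, so that the resulting error stays polynomial in $\delta$, linear in $n$ and $m$, and of size $q^{-1/2}$. This is precisely where the two algebraic-geometric inputs are used in tandem: the intrinsic degree bounds for annihilating polynomials of circuits keep the degrees of the $B_i$ and of their preimages in $V$ under control, while the effective Lang-Weil estimates turn those degree bounds into $\F$-point counts, hence into bounds on the probabilities $P_i$. The information-theoretic ingredients — monotonicity, the chain rule, subadditivity of conditional entropy, and $H(X_i\mid\cdot)\le\ln q$ — are then routine bookkeeping.
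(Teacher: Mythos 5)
Your proposal is correct in its essentials, but it takes a genuinely different route from the paper. The paper proceeds in stages: it first proves a lower bound for $h_\F(\mathcal{I})$ when $\mathcal{I}$ is a base (Theorem 4.1), transfers it to arbitrary independent sets via submodularity, then bounds the conditional entropy $H(\mathbb{P}_j\mid\mathbb{P}_{\mathcal{C}\setminus\{j\}})$ for circuits (Proposition 4.2 and Corollary 4.3), and finally extends to arbitrary dependent sets by a submodularity chain (Lemma 4.4), combining the pieces at the end. You instead sandwich $H(X_{\mathcal K})$ directly: for the upper bound you observe that $\overline{\pi_{\mathcal K}(V)}$ is absolutely irreducible of dimension $r(\mathcal K)$ and degree $\le\delta$ (since coordinate projection does not increase degree) and invoke effective Lang--Weil on that projected variety, a clean shortcut that the paper never uses and which avoids the circuit machinery entirely for this direction; for the lower bound you reduce to a base of $\mathcal K$, extend to a base $\mathcal B'$ of $\mathcal E$, and apply the chain rule $H(X_{\mathcal B})=H(X)-\sum_{j>s}H(X_{i_j}\mid X_{\mathcal B_{j-1}})-H(X\mid X_{\mathcal B'})$, controlling the last term by conditional-entropy bounds along fundamental circuits (the same circuit-annihilator plus Lang--Weil inputs the paper uses, organized differently). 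The trade-offs: your upper bound is more elegant and gives a unified treatment of all $\mathcal K$ at once, at the cost of needing two additional standard facts not spelled out in the paper (degree is non-increasing under linear projection, and the two-sided version of the effective Lang--Weil bound); your lower bound is structurally a single chain-rule telescoping rather than the paper's base/independent/circuit/dependent progression. A few small points worth tightening if you were to write this up: your degree bound $\deg g_i\le\delta$ via ``$\overline{\pi_{C_i}(V)}$ is a hypersurface of degree $\le\delta$'' is a cleaner route to Proposition \ref{prop: degree bound f_C} than the paper's appeal to Schost's parametric resolutions, but the non-increase of degree under projection does need a Bézout-style argument; your claimed $P_i=O(\mathrm{poly}(\delta)\,q^{-1})$ is actually $P_i\le\delta^2 q^{-1/2}$ with the paper's crude denominator estimate (the sharper $O(\delta^2/q)$ needs the refined $|V(\F)|\ge q^m(1-5\delta^{7/3}q^{-1/2})$, which you do invoke), though either suffices; and the hypothesis you cite, $q\ge 5\delta^{7/3}+1$, reflects a typo in the introduction's statement --- the argument, yours included, really needs $q>(5\delta^{7/3}+1)^2$ as in Lemma \ref{lemma: explicit Lang-Weil}.
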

This yields a quantitative refinement of Mat\'u$\check{\textrm{s}}$'s theorem,
with effective constants associated to the underlying variety $V$.

Beyond the proof of this quantitative version, we also undertake a
systematic study of \emph{theoretical aspects} of the varieties
associated with algebraic matroids. In this part of the paper we
analyze some of their algebraic, geometric, and arithmetic
properties, and explain how these features influence the entropic
behavior of the corresponding matroids. In particular, we highlight
the role of dimension theory, degrees, and rational points in
controlling the quality of entropic approximations. As an example,
we obtain the following result.
\begin{proposition}\label{prop: degree bound f_C intro}
With hypotheses and notations as in Theorem \ref{th: main intro},
let $\mathcal{C}\subset\mathcal{E}$ be a circuit and let
$j\in\mathcal{C}$. Then there exists a nonzero absolutely
irreducible polynomial $f_\mathcal{C} \in \G[X_i : i \in
\mathcal{C}]\setminus\G[X_i : i \in \mathcal{C}\setminus\{j\}]$ of
degree at most $\delta$ such that
\[
f_\mathcal{C}((f_i)_{i \in \mathcal{C}}) = 0.
\]
\end{proposition}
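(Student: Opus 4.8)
The plan is to obtain $f_\mathcal{C}$ as a defining equation of a coordinate projection of $V$ and then to control its degree by an intrinsic degree bound for linear projections. First I would introduce the coordinate projection $\pi_\mathcal{C}\colon\A^n_{\overline{\G}}\to\A^{\mathcal{C}}_{\overline{\G}}$, $(x_i)_{i\in\mathcal{E}}\mapsto(x_i)_{i\in\mathcal{C}}$, and set $W:=\overline{\pi_\mathcal{C}(V)}$, the Zariski closure of the image. Since $V$ is, by its construction, geometrically irreducible and defined over $\G$, the variety $W$ inherits both properties: it is irreducible over $\overline{\G}$, and its vanishing ideal is generated by elements of $\G[X_i:i\in\mathcal{C}]$.

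The next step is a dimension count. Write $f_i:=X_i|_V$ for the coordinate functions on $V$, so that for every $S\subseteq\mathcal{E}$ the rank $r(S)$ equals the transcendence degree over $\G$ of $\G(f_i:i\in S)$, i.e. $r(S)=\dim\overline{\pi_S(V)}$. Because $\mathcal{C}$ is a circuit, it is dependent while $\mathcal{C}\setminus\{j\}$ is independent, whence $r(\mathcal{C})=|\mathcal{C}|-1$. Therefore $\dim W=|\mathcal{C}|-1=\dim\A^{\mathcal{C}}-1$, so $W$ is a hypersurface. Since $\G[X_i:i\in\mathcal{C}]$ is a UFD, this gives $W=\{f_\mathcal{C}=0\}$ for an irreducible $f_\mathcal{C}\in\G[X_i:i\in\mathcal{C}]$; as $\G$ is perfect and $W$ is geometrically irreducible, $f_\mathcal{C}$ is in fact absolutely irreducible. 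Finally, because $\pi_\mathcal{C}(V)\subseteq W$, substituting $X_i\mapsto f_i$ makes $f_\mathcal{C}$ vanish identically on $V$, i.e. $f_\mathcal{C}((f_i)_{i\in\mathcal{C}})=0$.

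It then remains to verify that $f_\mathcal{C}$ actually involves $X_j$ and to estimate its degree. If $f_\mathcal{C}$ lay in $\G[X_i:i\in\mathcal{C}\setminus\{j\}]$, then $f_\mathcal{C}((f_i)_{i\in\mathcal{C}})=0$ would be a nontrivial algebraic relation among the family $(f_i)_{i\in\mathcal{C}\setminus\{j\}}$, contradicting its algebraic independence (which holds since $\mathcal{C}\setminus\{j\}$ is independent in ${\sf M}$); hence $f_\mathcal{C}\notin\G[X_i:i\in\mathcal{C}\setminus\{j\}]$. For the degree, $\deg(f_\mathcal{C})=\deg(W)$, and since $W$ is the closure of a linear (coordinate) image of $V$, one has $\deg(W)\le\deg(V)=\delta$ by the standard fact that linear projections do not increase the degree of a variety. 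This yields $\deg(f_\mathcal{C})\le\delta$ and finishes the argument.

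I expect the degree estimate $\deg(\overline{\pi_\mathcal{C}(V)})\le\deg(V)$ to be the only nontrivial ingredient: the dimension computation and the claim that $f_\mathcal{C}$ involves $X_j$ are immediate from the matroid axioms, while the descent of coefficients to $\G$ together with absolute irreducibility is routine. The point requiring care is that $\pi_\mathcal{C}|_V$ need not be generically finite onto $W$ — its generic fibres may be positive-dimensional — so one cannot argue by a naive multiplicativity of fibre degrees; instead the inequality should be obtained through a generic linear section of $W$ of complementary dimension together with a B\'ezout count on $V$, or cited from the intrinsic geometric degree bounds in the literature.
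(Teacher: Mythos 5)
Your proof is correct, but it follows a genuinely different route from the paper's. The paper fixes a base $\mathcal{I}\supseteq\mathcal{C}\setminus\{j\}$, uses the dominance of the projection $\pi_\mathcal{I}:V\to\A^m$, and invokes \cite[Proposition~1]{Schost03} to produce a minimal polynomial $m_{j,\mathcal{I}}$ of degree at most $\delta$ generating the principal ideal $I_\G\cap\G[X_i:i\in\mathcal{I}\cup\{j\}]$; it then shows $m_{j,\mathcal{I}}$ agrees with the (separately constructed) irreducible annihilator $f_\mathcal{C}$ up to a constant. You instead work directly with $W:=\overline{\pi_\mathcal{C}(V)}$: the matroid axioms give $\dim W=|\mathcal{C}|-1$, so $W$ is a geometrically irreducible hypersurface, defined over $\G$ with absolutely irreducible defining polynomial $f_\mathcal{C}$ (the descent and absolute irreducibility work as you say because $\G$ is perfect), and the degree bound is obtained in one stroke from the fact that images under affine-linear maps have degree at most that of the source (this is in \cite{Heintz83}, already among the paper's references). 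Your version is more direct and self-contained, replacing Schost's parametric-resolution result with a standard intrinsic degree inequality; the paper's version has the mild advantage of also identifying $f_\mathcal{C}$ as a generator of the eliminant ideal $I_\G\cap\G[X_i:i\in\mathcal{I}\cup\{j\}]$, a fact it leans on in the entropy estimates. You correctly flag that $\pi_\mathcal{C}|_V$ need not be generically finite, so the degree bound cannot be argued via fibre counting; the cited Heintz-type lemma handles this cleanly.
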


The existence of an annihilating polynomial $f_\mathcal{C}$ for
the set $\{f_i:i\in\mathcal{C}\}$ is well-known (see, e.g., \cite[Theorem 11]{RoSiTh20}).
Furthermore, an {\em extrinsic} upper bound on the
degree of this polynomial in terms of the degrees of the polynomials
$\{f_i:i\in\mathcal{C}\}$ can be obtained applying \cite[Corollary
6]{BeMiSa13}. Our contribution relies on the {\em intrinsic} bound
of the proposition in terms of the degree of the variety $V$.

The paper is organized as follows. In Section~\ref{section:
preliminaries} we review the necessary background on information
theory, matroid theory, and algebraic geometry.
Section~\ref{section: geometry of algebraic matroids} discusses
theoretical aspects of algebraic, geometric, and arithmetic nature
of the variety underlying an algebraic matroid. Finally,
Section~\ref{section: main} contains the precise statement and proof
of our main theorem. %In Section~\ref{sec:applications} we discuss further
%consequences and possible extensions of the results.

% ----------------------------------------------------------------
\section{Preliminaries}
\label{section: preliminaries}
In this section we recall basic notions on information theory,
matroid theory, and algebraic geometry that will be used throughout
the paper.
%
% ----------------------------------------------------------------
% ----------------------------------------------------------------
%
\subsection{Entropy and polymatroids}

Given a discrete random variable $S$ taking values in a finite set
$\mathcal{S}$ with probability mass function
$p(s) := \mathbb{P}(S=s)$, the \emph{Shannon entropy} of $S$ is defined by
\[
H(S):= - \sum_{s \in \mathcal{S}} p(s) \ln p(s),
\]
where $\ln$ represents the logarithm in base $e$, and where terms
with $p(s)=0$ are omitted. For two random variables $S$ and $T$ with
joint probability distribution $p(s,t)$, one defines the \emph{joint
entropy}
\[
H(S,T):= - \sum_{s \in \mathcal{S}} \sum_{t \in \mathcal{T}}
p(s,t)\log p(s,t),
\]
and the \emph{conditional entropy} of $S$ given $T$ as
\[
H(S \mid T):= \sum_{t \in \mathcal{T}} p(t) H(S \mid T=t),
\]
where
\[
H(S \mid T = t) := - \sum_{s \in \mathcal{S}} p(s \mid t) \, \ln
p(s \mid t).
\]
Equivalently,
\[
H(S \mid T):= H(S,T) - H(T).
\]

These notions extend to random vectors $S:=(S_i)_{i\in\mathcal{E}}$
indexed by a finite set $\mathcal{E}$, where for $\mathcal{A}\subset
\mathcal{E}$ one considers the subvector
$S_\mathcal{A}:=(S_i)_{i\in\mathcal{A}}$. Associated to such a
vector is the set function $h: 2^\mathcal{E}\to\mathbb{R}_{\ge 0}$,
$h(\mathcal{A}) := H(S_\mathcal{A})$. A function $h:
2^\mathcal{E}\to\mathbb{R}_{\ge 0}$ is called a \emph{polymatroid}
if it is:
\begin{itemize}
  \item normalized ($h(\emptyset)=0$),
  \item monotone ($\mathcal{A}\subset\mathcal{B}$ implies
$h(\mathcal{A})\le h(\mathcal{B})$), and
  \item submodular
($h(\mathcal{A})+h(\mathcal{B}) \ge h(\mathcal{A}\cup \mathcal{B})+h(\mathcal{A}\cap \mathcal{B})$
for all $\mathcal{A},\mathcal{B}$).
\end{itemize}
When $h$ arises as above from
the entropies of a random vector $S$, it is said to be
\emph{entropic}. More precisely, $h$ is called the
entropic polymatroid \emph{induced} by the
random vector $S$.

\subsection{Matroids and algebraic matroids}
\label{subsec: matroids}

We next recall basic notions of matroid theory; see \cite{Oxley11}
for background.

Let $\mathcal{E}$ be a finite set. A \emph{matroid} on $\mathcal{E}$
is a pair ${\sf M}:= (\mathcal{E}, r)$ where $r : 2^\mathcal{E} \to
\mathbb{Z}_{\geq 0}$ is a function, called the \emph{rank function},
satisfying the following axioms for all subsets $\mathcal{A},
\mathcal{B} \subset \mathcal{E}$:
\begin{enumerate}
  \item[\textnormal{(R1)}] $0 \leq r(\mathcal{A}) \leq |\mathcal{A}|$,
  \item[\textnormal{(R2)}] If $\mathcal{A} \subset\mathcal{B} \subset \mathcal{E}$, then $r(\mathcal{A}) \leq r(\mathcal{B})$,
  \item[\textnormal{(R3)}] For all $\mathcal{A}, \mathcal{B} \subset \mathcal{E}$,
  \[
  r(\mathcal{A} \cup \mathcal{B}) + r(\mathcal{A} \cap \mathcal{B}) \leq r(\mathcal{A}) + r(\mathcal{B}).
  \]
\end{enumerate}
These axioms imply that $r$ is a normalized, monotone,
and submodular function with integer values.

A subset $\mathcal{I}\subset \mathcal{E}$ is said to be
\emph{independent} if $r(\mathcal{I})=|\mathcal{I}|$, and a
\emph{base} is an inclusion-maximal independent set. A
\emph{circuit} is a minimally dependent subset, i.e., a dependent
set all of whose proper subsets are independent. If $\mathcal{I}$ is
a base and $j\in \mathcal{E}\setminus\mathcal{I}$, then the
\emph{fundamental circuit of $j$ with respect to $\mathcal{I}$} is
the unique circuit contained in $\mathcal{I}\cup\{j\}$.

Using the notion of entropic polymatroid introduced above,
we call a matroid ${\sf M}:=(\mathcal{E},r)$
\emph{almost entropic} if its rank function $r$ lies in the closure
of the cone of entropic polymatroids, i.e., if there exists a
sequence of entropic polymatroids $(h^{(m)})_{m\in\mathbb{N}}$
such that for every subset $\mathcal{A} \subset \mathcal{E}$,
\[
\lim_{m \to \infty} h^{(m)}(\mathcal{A}) = r(\mathcal{A}).
\]

A matroid ${\sf M}:=(\mathcal{E},r)$ is said to be \emph{algebraic
over a field $\G$} if there exists a family of elements
$(f_i)_{i\in\mathcal{E}}$ in a purely transcendental extension
$\mathbb{H}$ of $\G$ such that for every $\mathcal{A}\subset
\mathcal{E}$ the rank is given by
\[
r(\mathcal{A}) = \deg_{\operatorname{tr}/\G}(f_i : i \in \mathcal{A}),
\]
where $\deg_{\operatorname{tr}/\G}$ denotes
the transcendence degree of $\G(f_i : i \in \mathcal{A})$ over $\G$.

%
%----------------------------------------------------------------
%----------------------------------------------------------------
%
\subsection{Algebraic geometry}

We use standard notions and notations of
algebraic geometry, as found, for example, in \cite{Harris92},
\cite{Kunz85}, and \cite{Shafarevich94}.

Let $\G$ be a perfect field and $\overline{\G}$ its algebraic
closure. We denote by $\A_{\overline{\G}}^n$ the $n$--dimensional
affine space $\overline{\G}{}^{n}$. This space is endowed with its
Zariski topologies over $\G$, for which a closed set is the zero
locus of a set of polynomials of $\G[X_1,\ldots, X_{n}]$.
An {\em affine variety of $\A_{\overline{\G}}^n$ defined over} $\G$
(or an affine $\G$--variety) is the set of common zeros in
$\A_{\overline{\G}}^n$ of polynomials $F_1,\ldots, F_{m} \in
\G[X_1,\ldots, X_{n}]$. %We think a projective or affine
%$K$--variety to be equipped with the induced Zariski topology.
We will often write $V(F_1\klk F_m)$ or $\{F_1=0\klk F_m=0\}$ to
denote this affine $\G$--variety. %For a polynomial $G\in \G[X_1\klk
%X_n]$, we denote by $\{G\not=0\}$ the complement in
%$\A_{\overline{\G}}^n$ of the hypersurface $\{G=0\}$.

A $\G$--variety $V$ is said to be $\G$--{\em irreducible} if it
cannot be expressed as a finite union of proper $\G$--subvarieties
of $V$. Furthermore, $V$ is {\em absolutely irreducible} if it is
$\overline{\G}$--irreducible as a $\overline{\G}$--variety. Any
$\G$--variety $V$ admits a unique (up to reordering) irredundant
decomposition $V=\mathcal{C}_1\cup \cdots\cup\mathcal{C}_s$, where
each $\mathcal{C}_i$ is  $\G$--irreducible  (absolutely irreducible
and defined over $\G$). These are called the {\em irreducible}
$\G$--{\em components} ({\em absolutely irreducible components}) of
$V$.

For a $\G$--variety $V$ of $\A_{\overline{\G}}^n$, we denote by
$I(V)$ its {\em defining ideal}, namely, the set of all polynomials
in $\G[X_1,\ldots, X_n]$ that vanish on $V$. The {\em coordinate
ring} $\G[V]$ of $V$ is defined as $\G[X_1,\ldots,X_n]/I(V)$. When
$V$ is irreducible, then $\G[V]$ is a domain, and its field of
fractions $\G(V)$ is called the {\em field of rational functions} of
$V$. The {\em dimension} $\dim V$ of $V$ is the maximal length $r$
of chains $V_0\varsubsetneq V_1 \varsubsetneq\cdots \varsubsetneq
V_r$ of nonempty irreducible $\G$--varieties contained in $V$. We
say that $V$ has {\em pure dimension} $r$, or it is {\em
equidimensional of dimension $r$}, if all its irreducible
$\G$--components are of dimension $r$.

The {\em degree} $\deg V$ of an equidimensional $\G$--variety $V$ is
the maximum number of points in the intersection $V\cap L$, where
$L$ is a linear space of codimension $\dim V$ such that $V\cap L$ is
a finite set. More generally, following \cite{Heintz83} (see also
\cite{Fulton84}), if $V=\mathcal{C}_1\cup\cdots\cup \mathcal{C}_s$
is the decomposition of $V$ into irreducible $\G$--components, we
define the degree of $V$ as
$$\deg V:=\sum_{i=1}^s\deg \mathcal{C}_i.$$
We will frequently use the following {\em B\'ezout inequality} (see
\cite{Heintz83}, \cite{Fulton84}): if $V$ and $W$ are
$\G$--varieties of $\A_{\overline{\G}}^n$, then
\begin{equation}\label{eq: Bezout}
\deg (V\cap W)\le \deg V \cdot \deg W.
\end{equation}

%Another result we shall use concerns the behavior of degree under
%linear mappings. Let $V\subset\Pp^m$ and $W\subset\Pp^n$ be
%$K$--varieties and let $\phi:V\to W$ be a regular linear map. Then
%(see, e.g., \cite[Lemma 2.1]{CaMa07})
%%
%  \begin{equation}\label{eq:degree linear projection}
%    \deg \overline{\phi (V)} \leq \deg V,
%  \end{equation}
%%
%where $\overline{\phi (V)}$ is the Zariski closure of $\phi(V)$ in
%$\Pp^n$, $\deg \overline{\phi (V)}$ denotes the degree of
%$\overline{\phi (V)}$ as a $K$--subvariety of $\Pp^n$ and $\deg V$
%denotes the degree of $V$ as a $K$--subvariety of $\Pp^m$.
%
%----------------------------------------------------------------
%----------------------------------------------------------------
%
%\paragraph{Singular locus}

%Regular maps will be represented by solid arrows $\to$, while
%partial rational maps will be indicated with dashed arrows
%$\dashrightarrow$.
Let $V$ and $W$ be irreducible affine $\G$--varieties of the same
dimension, and let $f:V\to W$ be a regular map for which
$\overline{f(V)}=W$, where $\overline{f(V)}$ is the closure of
$f(V)$ with respect to the Zariski topology of $W$. Such a map is
called {\em dominant}. Then $f$ induces a ring extension
$\G[W]\hookrightarrow \G[V]$ by composition with $f$.

%----------------------------------------------------------------
%----------------------------------------------------------------
%
\subsection{Rational points}
Let $\fq$ be the finite field of $q$ elements and let $\cfq$ be the
algebraic closure of $\fq$. Let $\A^n(\fq)$ be the $n$--dimensional
$\fq$--vector space $\fq^n$. For an affine variety
$V\subset\A_{\cfq}^n$, we denote by $V(\fq)$ the set of
$\fq$--rational points of $V$, namely $V(\fq):=V\cap \A^n(\fq)$.
%
%For a projective variety $V$ of dimension $r$ and degree $\delta$,
%we have (see \cite[Proposition 12.1]{GhLa02a} or \cite[Proposition
%3.1]{CaMa07}):
%%
% \begin{equation}\label{eq: upper bound -- projective gral}
%   |V(\fq)|\leq \delta p_r.
% \end{equation}
%
If $V$ is an affine variety of dimension $m$ and degree $\delta$,
then (see, e.g., \cite[Lemma 2.1]{CaMa06})
 \begin{equation}\label{eq: upper bound -- affine gral}
   |V(\fq)|\leq \delta q^m.
 \end{equation}

% ----------------------------------------------------------------
% ----------------------------------------------------------------
% ----------------------------------------------------------------
% ----------------------------------------------------------------

\section{On the algebraic and geometric structure of algebraic matroids}
\label{section: geometry of algebraic matroids}
Let $\G$ be a finite field and let ${\sf M}:= (\mathcal{E}, r)$ be
an algebraic matroid over $\G$, that is, there is a family of elements
$(f_i)_{i \in \mathcal{E}}$ in a purely transcendental field
extension $\mathbb{H}:=\G(X_1,\ldots,X_s)$ of $\G$ such that the
rank function $r:=2^\mathcal{E} \to\mathbb{Z}_{\ge 0}$ satisfies
\[
r(\mathcal{I}) := \deg_{\operatorname{tr}/\G}(f_i : i \in
\mathcal{I}),
\]
where $\deg_{\operatorname{tr}/\G}$ denotes the transcendence degree
of the field extension $\G(f_i : i \in \mathcal{I})$ over $\G$.
We denote $n:=|\mathcal{E}|$ and let $d$ be the maximum of the degrees of
the polynomials $f_1,\ldots,f_n$. We shall assume that
$m:=r(\mathcal{E})<n$.

In this section we establish a number of facts concerning algebraic,
geometric, and arithmetic aspects of the algebraic matroid
${\sf M}:= (\mathcal{E}, r)$.

Let $I_\G$
be the ideal of $\G[X_1,\ldots,X_n]$ consisting of all polynomials
$f$ such that $f(f_1,\ldots,f_n) = 0$.
\begin{remark}
$I_\G$ is a prime ideal of $\G[X_1,\ldots,X_n]$.
\end{remark}
\begin{proof}
Let $\Phi:\A_{\overline{\G}}^n\to\A_{\overline{\G}}^n$ the
polynomial map defined by $\Phi(\bfs x):=(f_1(\bfs
x),\ldots,f_n(\bfs x))$. Then $f$ belongs to $I_\G$ is and only if
$f$ vanishes on the image $\mathrm{Im}(\Phi)$ of $\Phi$.
Equivalently, $f$ vanishes on the Zariki closure of
$\mathrm{Im}(\Phi)$. As $\A_{\overline{\G}}^n$ is irreducible, the
Zariski closure of $\mathrm{Im}(\Phi)$ is also irreducible, and
therefore its ideal $I_\G$ is prime.
\end{proof}

It follows that $V:= V(I_\G)\subset\A_{\overline{\G}}^n$ is an
affine absolutely irreducible variety defined over $\G$. The
dimension of $V$ equals the transcendence degree of its function
field, which in this case is the field of fractions of
$\G[X_1,\ldots,X_n]/I_\G\cong\G[f_1,\ldots,f_n]$. As a consequence,
we obtain the following remark.
\begin{remark}\label{remark: dimension}
The dimension of $V$ equals $\deg_{\operatorname{tr}/\G}(f_i : i \in
\mathcal{E})=r(\mathcal{E})$.
\end{remark}

Although we shall express our estimates in terms of geometric
invariants associated to the variety $V$, such as its dimension and
degree, we first provide a simple upper bound on the degree of
$V$ in terms of the degrees of $f_1,\ldots,f_n$.
\begin{lemma}\label{lemma: degree bound V_G}
If $m:=r(\mathcal{E})$ denotes the dimension of $V$, then $\deg
V\le d^m$.
\end{lemma}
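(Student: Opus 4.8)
The plan is to realize $V$ as the image closure of the polynomial map $\Phi=(f_1,\dots,f_n)\colon\A^s_{\overline{\G}}\to\A^n_{\overline{\G}}$ and to bound its degree by intersecting with a generic linear space of the complementary codimension. Concretely, since $\dim V=m$, a generic linear space $L\subset\A^n$ of codimension $m$ meets $V$ in finitely many points, and $\deg V$ is the maximal such intersection number. Writing $L$ as the common zero set of $m$ generic affine linear forms $\ell_1,\dots,\ell_m$ in $X_1,\dots,X_n$, a point of $V\cap L$ pulls back under $\Phi$ to the zero set in $\A^s$ of the $m$ polynomials $g_j:=\ell_j(f_1,\dots,f_n)$, each of degree at most $d$. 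The key observation is that $\Phi$ restricted to $\A^s$ is dominant onto $V$, hence generically finite of some degree $e\ge 1$ (the degree of the function field extension $\G(V)\hookrightarrow\G(X_1,\dots,X_s)$, which equals $m$ as the transcendence degree means the fibers are finite), so each of the finitely many points of $V\cap L$ has at least one preimage in $\A^s$.

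The heart of the argument is then a Bézout-type count on the preimage. The variety $\{g_1=0,\dots,g_m=0\}\subset\A^s$ has, by the Bézout inequality \eqref{eq: Bezout} applied iteratively (or by \cite{Heintz83}), degree at most $d^m$; in particular its zero-dimensional part — which exists and surjects onto $V\cap L$ because the generic fiber of $\Phi$ over $V$ is finite and a generic $L$ avoids the positive-dimensional fibers — has at most $d^m$ points. Since the map from this finite fiber set onto $V\cap L$ is surjective, we get $|V\cap L|\le d^m$, and taking the supremum over admissible $L$ yields $\deg V\le d^m$. One has to be slightly careful that the chosen $L$ is generic enough that $\Phi^{-1}(V\cap L)$ is genuinely finite (not just that $V\cap L$ is finite): this is where one uses that the locus in $V$ over which $\Phi$ has positive-dimensional fibers is a proper closed subset, so a generic codimension-$m$ linear space misses it, and over the complement $\Phi$ is a finite (even quasi-finite) morphism.

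I expect the main obstacle to be exactly this fiber-dimension bookkeeping: making rigorous that for a \emph{generic} $L$ of codimension $m$ the full preimage $\Phi^{-1}(L)$ is finite rather than merely that $V\cap L$ is finite. The clean way to handle it is a dimension count: $\dim\Phi^{-1}(L)=\dim\overline{\Phi^{-1}(L)}$, and since $\overline{\operatorname{Im}\Phi}=V$ has dimension $m$ while $\A^s$ has dimension $s\ge m$, the generic fiber of $\Phi$ has dimension $s-m$; imposing $m$ generic linear conditions on the target cuts $\A^s$ down to dimension $s-m$ generically, but restricting attention to the points lying over the $0$-dimensional locus $V\cap L$ forces dimension $0$. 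Alternatively — and this is probably the cleanest exposition — one normalizes the situation by a generic linear change of coordinates in $\A^s$ and invokes Noether normalization / the theory of Chow forms as in \cite{Heintz83}, where the estimate $\deg V\le\prod\deg g_j\le d^m$ for the equidimensional $m$-codimensional image is essentially built in. I would present the short dimension-theoretic version and cite \cite{Heintz83} for the degree-of-image bound, keeping the write-up to a few lines.
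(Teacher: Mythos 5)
Your opening is the same as the paper's: realize $V$ as $\overline{\operatorname{Im}(\Phi)}$ for $\Phi=(f_1,\dots,f_n)\colon\A^s_{\overline{\G}}\to\A^n_{\overline{\G}}$, choose a linear space $L$ of codimension $m$ achieving $\deg V=|V\cap L|$, pull back the defining linear forms to $g_j:=\ell_j(f_1,\dots,f_n)$ of degree at most $d$, and apply the B\'ezout inequality \eqref{eq: Bezout} to conclude $\deg\Phi^{-1}(L)\le d^m$. Up to that point the two arguments coincide.

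The gap is in how you pass from the degree bound on $\Phi^{-1}(L)$ to a bound on $|V\cap L|$. You assert, twice, that $\Phi$ is generically finite over $V$ (``the generic fiber of $\Phi$ over $V$ is finite,'' ``generically finite of some degree $e\ge 1$''), and then rely on a zero-dimensional part of $\Phi^{-1}(L)$ surjecting onto $V\cap L$. This is not true in general: $\dim V=m$ while the source is $\A^s$, so the generic fiber of $\Phi$ over $V$ has dimension $s-m$, which is positive whenever $s>m$. In that case \emph{every} fiber over $V\cap L$ is positive-dimensional, $\Phi^{-1}(L)$ has no zero-dimensional part at all, and no choice of $L$, however generic, repairs this — the positive-dimensional fibers are not a degeneration locus you can avoid, they are the generic behavior. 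Your own later remark (``the generic fiber of $\Phi$ has dimension $s-m$'') contradicts the earlier finiteness claim, and the subsequent sentence asserting that restricting to the fibers over $V\cap L$ ``forces dimension $0$'' does not follow: a union of finitely many $(s-m)$-dimensional fibers has dimension $s-m$. So the proposal as written does not close.

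The paper's resolution avoids finiteness of the preimage entirely. Write $\Phi^{-1}(\mathcal{S})=\bigcup_{i=1}^N\mathcal{C}_i$ for the decomposition into irreducible components, where $\mathcal{S}:=V\cap L$ is arranged (using that $\operatorname{Im}(\Phi)$ contains a dense open subset of $V$) to lie in $\operatorname{Im}(\Phi)$. Each $\Phi(\mathcal{C}_i)$ is an irreducible subset of the finite set $\mathcal{S}$, hence a single point; since $\Phi(\Phi^{-1}(\mathcal{S}))=\mathcal{S}$, every point of $\mathcal{S}$ is hit by some component. Therefore $|\mathcal{S}|\le N\le\sum_i\deg\mathcal{C}_i=\deg\Phi^{-1}(\mathcal{S})\le d^m$. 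The number of irreducible components is always bounded by the cumulative degree, regardless of their dimensions, so the argument is insensitive to whether $s=m$ or $s>m$. That is the idea your write-up is missing.
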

\begin{proof}
As stated before, if $\Phi:=(f_1\klk
f_n):\A_{\overline{\G}}^n\to\A_{\overline{\G}}^n$ is the map defined
above, then $V\subset\A_{\overline{\G}}^n$ is the Zariski closure of
$\mathrm{Im}(\Phi)$. Let $H_1\klk H_m$ be hyperplanes of
$\A_{\overline{\G}}^n$ such that $|V\cap H_1\cap\cdots\cap H_m|=\deg
V$. Since $\mathrm{Im}(\Phi)$ is a dense subset of $V$ containing a
Zariski-open dense subset, we may assume without loss of generality
that $V\cap H_1\cap\cdots\cap H_m=\mathrm{Im}(\Phi)\cap
H_1\cap\cdots\cap H_m$. Let $\mathcal{S}:=\mathrm{Im}(\Phi)\cap
H_1\cap\cdots\cap H_m$. Then
$$\Phi^{-1}(\mathcal{S})=\Phi^{-1}(H_1)\cap\cdots\cap \Phi^{-1}(H_m).$$

Observe that $\Phi^{-1}(H_i)=\{g_i=0\}$, where $g_i$ is a linear
combination of the polynomials $f_1\klk f_n$ for $i=1\klk m$.
Therefore, by the B\'ezout inequality \eqref{eq: Bezout} it follows
that
$$\deg \Phi^{-1}(\mathcal{S})\le \deg(\Phi^{-1}(H_1))\cdots\deg(\Phi^{-1}(H_m))\le
\deg g_1\cdots\deg g_m\le  d^m.$$
Let $\Phi^{-1}(\mathcal{S})=\cup_{i=1}^N\mathcal{C}_i$ be the
decomposition of $\Phi^{-1}(\mathcal{S})$ into irreducible
components. Since $\Phi(\Phi^{-1}(\mathcal{S}))=\mathcal{S}$ and
each irreducible component $\mathcal{C}_i$ of
$\Phi^{-1}(\mathcal{S})$ is mapped by $\Phi$ to a point of
$\mathcal{S}$, we have
$$\deg V=|\mathcal{S}|\le N\le\sum_{i=1}^N\deg \mathcal{C}_i=\deg
\Phi^{-1}(\mathcal{S})\le d^m.$$
This completes the proof of the lemma.\end{proof}

For any finite field extension $\F$ of $\G$, let $V(\F):= V \cap
\F^n$ denote the set of $\F$-rational points of $V$.
As a consequence of \cite[Theorem 5.4]{CaMa06}, we have the
following result.
\begin{lemma}\label{lemma: condition V_F is nonempty}
Let $\delta:=\deg V$. If $|\F|\ge 2\delta^4$, then the set $V(\F)$ is nonempty.
\end{lemma}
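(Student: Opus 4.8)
The plan is to deduce Lemma~\ref{lemma: condition V_F is nonempty} directly from an effective Lang--Weil type estimate for the number of $\F$-rational points of an absolutely irreducible variety, applied to $V$. Recall that $V\subset\A^n_{\overline{\G}}$ is absolutely irreducible and defined over $\G$ (hence over $\F$), of dimension $m:=r(\mathcal{E})$ and degree $\delta:=\deg V$. The cited result \cite[Theorem 5.4]{CaMa06} provides a bound of the shape
\[
\bigl||V(\F)| - q^m\bigr| \le (\delta-1)(\delta-2)\, q^{m-1/2} + C(n,m,\delta)\, q^{m-1},
\]
for an explicit constant $C(n,m,\delta)$ that is polynomial in $\delta$ (and does not depend on $q$). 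First I would quote this estimate verbatim in the normalization used in \cite{CaMa06}, write $q:=|\F|$, and record the precise value of the lower-order constant so that the subsequent inequalities are completely explicit.

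Next I would derive a clean lower bound for $|V(\F)|$: from the estimate above,
\[
|V(\F)| \ge q^m - (\delta-1)(\delta-2)\, q^{m-1/2} - C(n,m,\delta)\, q^{m-1}
= q^{m-1}\Bigl(q - (\delta-1)(\delta-2)\, q^{1/2} - C(n,m,\delta)\Bigr).
\]
Thus $V(\F)\ne\emptyset$ as soon as the bracketed quantity is positive, i.e. as soon as
\[
q - (\delta-1)(\delta-2)\, q^{1/2} - C(n,m,\delta) > 0.
\]
The main obstacle — really the only nontrivial point — is to verify that the hypothesis $q\ge 2\delta^4$ is strong enough to force this. I would treat the left-hand side as a quadratic in $t:=q^{1/2}$, namely $t^2 - (\delta-1)(\delta-2)\,t - C(n,m,\delta)>0$, which holds once $t$ exceeds the larger root; a crude sufficient condition is $t^2 \ge 2(\delta-1)(\delta-2)\,t$ and $t^2\ge 2C(n,m,\delta)$ simultaneously, i.e. $q^{1/2}\ge 2(\delta-1)(\delta-2)$ and $q\ge 2C(n,m,\delta)$. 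Both follow from $q\ge 2\delta^4$ once one checks, using the explicit polynomial form of $C(n,m,\delta)$ together with the trivial bounds $m\le n\le\delta$ (or whatever bound the cited theorem actually needs on $n$ relative to $\delta$), that $2(\delta-1)(\delta-2)\le\sqrt{2}\,\delta^2\le q^{1/2}$ and $2C(n,m,\delta)\le 2\delta^4\le q$ for $\delta\ge 2$; the degenerate cases $\delta\le 1$ (where $V$ is a linear variety, automatically with an $\F$-point) are handled separately in a line.

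Finally I would assemble these steps: under $q\ge 2\delta^4$ the displayed lower bound for $|V(\F)|$ is strictly positive, hence $V(\F)\ne\emptyset$, which is the assertion of the lemma. I expect the write-up to be short; the only place demanding care is matching the exact shape and constants of \cite[Theorem 5.4]{CaMa06} so that the threshold $2\delta^4$ comes out correctly, and confirming that the ambient-dimension dependence in the lower-order term is absorbed by the degree (which it is, since for the image variety $V$ one has $n\le\deg V$ in the relevant range, or more simply the cited estimate is already stated in a dimension-free form).
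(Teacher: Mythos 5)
The paper does not actually derive this lemma: it is stated as a direct consequence of \cite[Theorem 5.4]{CaMa06}, and that theorem is itself an existence result of precisely this shape (non-emptiness of $V(\F)$ once $q$ exceeds $2\delta^4$ for an absolutely irreducible $\F$-variety of degree $\delta$), not an error-term estimate that needs post-processing. You are therefore reconstructing the internal argument of the cited result rather than matching what the paper does, which is legitimate in principle but carries risk, and two of your checks do not survive scrutiny. First, the crude sufficient condition you propose, $q^{1/2}\ge 2(\delta-1)(\delta-2)$, is not implied by $q\ge 2\delta^4$: it would require $2(\delta-1)(\delta-2)\le\sqrt{2}\,\delta^2$, but $(2-\sqrt{2})\delta^2-6\delta+4>0$ once $\delta\ge 10$, so the inequality fails for all but small degrees and a genuine quadratic analysis would be needed. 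Second, the assertion that $n\le\deg V$ ``in the relevant range'' is false for general affine varieties (a linear subspace of $\A^n$ has degree $1$ for every $n$); you should drop this and rely instead on the fact that the relevant estimates of \cite{CaMa06} are ambient-dimension free. More substantively, if one tried to prove the lemma from the effective Lang--Weil estimate the paper actually invokes elsewhere, namely \cite[Theorem 7.1]{CaMa06} as quoted in Lemma~\ref{lemma: explicit Lang-Weil}, with error term $\delta^2 q^{m-1/2}+5\delta^{13/3}q^{m-1}$, then at $q=2\delta^4$ one has $5\delta^{13/3}q^{m-1}=\tfrac{5}{2}\delta^{1/3}q^m>q^m$, so the lower-order term alone swallows the main term and non-emptiness cannot be concluded. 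This shows the threshold $2\delta^4$ genuinely depends on the sharper curve-section machinery inside \cite[Theorem 5.4]{CaMa06} and is not recoverable by a naive application of a general-purpose Lang--Weil bound; you should either cite that theorem as a black box, as the paper does, or reproduce its proof with the correct constants.
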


In the following, we will need a lower bound on the number $|V(\F)|$ of
$\F$-rational points of $V(\F)$.
\begin{lemma}\label{lemma: explicit Lang-Weil}
Let $q:=|\F|>(5\delta^{7/3}+1)^2$. Then
$$|V(\F)|\ge q^m-5\delta^{7/3}q^{m-1/2}>q^{m-1/2}.$$
\end{lemma}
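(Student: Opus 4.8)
The plan is to derive the bound from an effective version of the Lang--Weil estimate applied to the absolutely irreducible affine variety $V$, which is defined over $\G$ and hence over $\F$, and which by Remark~\ref{remark: dimension} and the discussion preceding it has dimension $m$ and degree $\delta$. The only real work is then to check that the hypothesis $q>(5\delta^{7/3}+1)^2$ is large enough to absorb the lower-order error term into the single clean bound $5\delta^{7/3}q^{m-1/2}$.

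First I would dispose of the degenerate cases separately. If $m=0$, then $V$ is a single point, which is $\G$--rational because $V$ is defined over the perfect field $\G$; hence $|V(\F)|=1=q^m$ and both asserted inequalities hold trivially since $q>1$. If $\delta=1$, then $V$ is an affine linear subspace of dimension $m$, so $|V(\F)|=q^m$, and the claim reduces to $q^{1/2}>6$, which follows from $q>(5+1)^2=36$. From now on assume $m\ge1$ and $\delta\ge2$.

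For the main case I would invoke the explicit estimate of Cafure and Matera (the same source already used for Lemma~\ref{lemma: condition V_F is nonempty}), which states that for an absolutely irreducible affine $\F$--variety of dimension $m\ge1$ and degree $\delta$ one has
\[
\bigl|\,|V(\F)|-q^m\,\bigr|\le(\delta-1)(\delta-2)\,q^{m-1/2}+5\,\delta^{13/3}\,q^{m-1}.
\]
In particular $|V(\F)|\ge q^m-(\delta-1)(\delta-2)q^{m-1/2}-5\delta^{13/3}q^{m-1}$, so it suffices to prove
\[
(\delta-1)(\delta-2)\,q^{m-1/2}+5\,\delta^{13/3}\,q^{m-1}\le 5\,\delta^{7/3}\,q^{m-1/2}.
\]
Dividing through by $q^{m-1/2}$, this becomes $(\delta-1)(\delta-2)+5\delta^{13/3}q^{-1/2}\le 5\delta^{7/3}$. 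From the hypothesis we get $q^{1/2}>5\delta^{7/3}+1>5\delta^{7/3}$, hence $q^{-1/2}<(5\delta^{7/3})^{-1}$ and therefore $5\delta^{13/3}q^{-1/2}<\delta^{2}$; since moreover $(\delta-1)(\delta-2)\le\delta^{2}$ and $\delta^{2}\le\delta^{7/3}$ for $\delta\ge1$, the left-hand side is at most $2\delta^{7/3}\le5\delta^{7/3}$, as needed. This gives $|V(\F)|\ge q^m-5\delta^{7/3}q^{m-1/2}$.

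Finally I would factor $q^m-5\delta^{7/3}q^{m-1/2}=q^{m-1/2}\bigl(q^{1/2}-5\delta^{7/3}\bigr)$ and apply $q^{1/2}>5\delta^{7/3}+1$ once more to conclude $q^{m-1/2}(q^{1/2}-5\delta^{7/3})>q^{m-1/2}$, which closes the chain of inequalities (and incidentally re-proves $V(\F)\neq\emptyset$). I do not expect a genuine obstacle here: the whole argument is bookkeeping around the cited Lang--Weil bound, and the only points deserving attention are the verification that this bound applies in the stated generality and the elementary manipulations absorbing the $q^{m-1}$ term.
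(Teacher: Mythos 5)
Your argument is correct and follows essentially the same route as the paper: both invoke the Cafure--Matera effective Lang--Weil estimate for an absolutely irreducible affine variety and then absorb the $q^{m-1}$ error term into the single $5\delta^{7/3}q^{m-1/2}$ bound using the hypothesis on $q$ (the paper uses the slightly weaker form $\delta^2 q^{m-1/2}+5\delta^{13/3}q^{m-1}$ of the error and the consequence $q\ge 2\delta^4$ of the hypothesis, while you use the sharper $(\delta-1)(\delta-2)$ coefficient and work directly from $q^{1/2}>5\delta^{7/3}+1$, but the bookkeeping is equivalent). Your explicit treatment of the degenerate cases $m=0$ and $\delta=1$ is a harmless extra precaution that the paper's proof leaves implicit.
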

\begin{proof}
According to \cite[Theorem 7.1]{CaMa06},
for the absolutely irreducible variety $V$ defined over $\F$ we have
the following estimate:
$$||V(\F)|-q^m|\le \delta^2q^{m-1/2}+5\delta^{13/3}q^{m-1}.$$
Taking into account that $q:=|\F|\ge 2(\deg V)^4$, we obtain
\begin{align*}
|V(\F)|&\ge q^m-\delta^2q^{m-1/2}-5\delta^{13/3}q^{m-1}\notag\\
&\ge q^m-\big(\delta^2+\mbox{$\frac{5}{\sqrt{2}}$}\delta^{7/3}\big)q^{m-1/2}
\ge q^m-5\delta^{7/3}q^{m-1/2}.
\end{align*}
This proves the lemma.
\end{proof}

%We remark that the hypothesis on $q$ implies that the lower bound of the lemma
%is greater than 1.

We conclude this section with a discussion of circuits in
$\mathcal{E}$. To any circuit $\mathcal{C}$ of $\mathcal{E}$ we can
naturally associate an annihilating polynomial $f_\mathcal{C}$ of
the set $\{f_i:i\in\mathcal{C}\}$ (see, e.g., \cite[Theorem 11]{RoSiTh20}).
An {\em extrinsic} upper bound on the
degree of this polynomial in terms of the degrees of the polynomials
$\{f_i:i\in\mathcal{C}\}$ can be obtained from \cite[Corollary
6]{BeMiSa13}. Nevertheless, we are interested in {\em intrinsic} bounds
in terms of invariants associated with the variety $V$. In this direction,
we have the following result.
\begin{proposition}\label{prop: degree bound f_C}
Let $\mathcal{C}\subset\mathcal{E}$ be a circuit and let
$j\in\mathcal{C}$. Then there exists a nonzero absolutely
irreducible polynomial $f_\mathcal{C} \in \G[X_i : i \in
\mathcal{C}]\setminus\G[X_i : i \in \mathcal{C}\setminus\{j\}]$ of
degree at most $\delta$ such that
\[
f_\mathcal{C}((f_i)_{i \in \mathcal{C}}) = 0.
\]
\end{proposition}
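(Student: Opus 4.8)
The plan is to realize $f_{\mathcal C}$ geometrically as a defining equation of the Zariski closure of the image of $V$ under the coordinate projection onto the $\mathcal C$-coordinates, and to control its degree by the degree of that image variety, which in turn is bounded by $\deg V$. Concretely, let $\pi_{\mathcal C}\colon\A^n_{\overline\G}\to\A^{\mathcal C}_{\overline\G}$ be the projection sending $\bfs x$ to $(x_i)_{i\in\mathcal C}$, and set $V_{\mathcal C}:=\overline{\pi_{\mathcal C}(V)}$. Since $V$ is absolutely irreducible and defined over $\G$, so is $V_{\mathcal C}$, and its dimension equals $\deg_{\operatorname{tr}/\G}(f_i:i\in\mathcal C)=r(\mathcal C)=|\mathcal C|-1$, because $\mathcal C$ is a circuit. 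Hence $V_{\mathcal C}$ is an absolutely irreducible hypersurface in $\A^{\mathcal C}_{\overline\G}$, so its defining ideal is principal, generated by a single absolutely irreducible polynomial $f_{\mathcal C}\in\G[X_i:i\in\mathcal C]$ (the generator can be chosen with coefficients in $\G$ since the hypersurface is defined over $\G$). By construction $f_{\mathcal C}((f_i)_{i\in\mathcal C})=0$, since $(f_1,\dots,f_n)$ is a generic point of $V$ and its $\mathcal C$-truncation lies on $V_{\mathcal C}$.

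The degree bound is the crux. I would argue that $\deg V_{\mathcal C}\le\deg V$: projecting an equidimensional variety onto a coordinate subspace does not increase the degree. The cleanest way is to intersect $V_{\mathcal C}$ with a generic linear space $L$ of complementary dimension inside $\A^{\mathcal C}$ so that $|V_{\mathcal C}\cap L|=\deg V_{\mathcal C}$; then $\pi_{\mathcal C}^{-1}(L)\cap V$ surjects onto $V_{\mathcal C}\cap L$, and since $\pi_{\mathcal C}^{-1}(L)$ is cut out inside $\A^n$ by the same linear forms (now in the variables $X_i$, $i\in\mathcal C$), the Bézout inequality \eqref{eq: Bezout} gives $\deg\bigl(\pi_{\mathcal C}^{-1}(L)\cap V\bigr)\le\deg V$; finally, as in the proof of Lemma \ref{lemma: degree bound V_G}, the number of points in the image is at most the number of irreducible components of $\pi_{\mathcal C}^{-1}(L)\cap V$, which is at most its degree, giving $\deg V_{\mathcal C}=|V_{\mathcal C}\cap L|\le\deg V=\delta$. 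Since $f_{\mathcal C}$ generates $I(V_{\mathcal C})$, we have $\deg f_{\mathcal C}=\deg V_{\mathcal C}\le\delta$.

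It remains to check $f_{\mathcal C}\notin\G[X_i:i\in\mathcal C\setminus\{j\}]$, i.e. that $X_j$ genuinely appears. This follows from the circuit property: $\mathcal C\setminus\{j\}$ is independent, so the $f_i$ with $i\in\mathcal C\setminus\{j\}$ are algebraically independent over $\G$, which means the projection $\pi_{\mathcal C\setminus\{j\}}(V_{\mathcal C})$ is Zariski-dense in $\A^{\mathcal C\setminus\{j\}}$; were $f_{\mathcal C}$ free of $X_j$, it would be a nonzero annihilating polynomial of $\{f_i:i\in\mathcal C\setminus\{j\}\}$, contradicting their algebraic independence.

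The main obstacle I anticipate is making the genericity choices in the degree argument fully rigorous — in particular ensuring that a generic linear space $L\subset\A^{\mathcal C}$ of dimension $1$ meets $V_{\mathcal C}$ in exactly $\deg V_{\mathcal C}$ points all lying in $\pi_{\mathcal C}(V)$ (not merely in its closure), and that the fibers of $V\to V_{\mathcal C}$ over those points are finite or at least have finitely many components, so that counting components controls the image cardinality. Both points are standard but need to be stated carefully; once they are in place, the Bézout estimate does the rest.
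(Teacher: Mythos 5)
Your proposal is correct, but it follows a genuinely different route from the paper's. The paper extends $\mathcal{C}\setminus\{j\}$ to a base $\mathcal{I}$ of ${\sf M}$, observes that $\pi_\mathcal{I}:V\to\A^m_{\overline\G}$ is dominant because $\{X_i:i\in\mathcal{I}\}$ is a transcendence basis of $\G(V)$, and then invokes \cite[Proposition 1]{Schost03}: the elimination ideal $I_\G\cap\G[X_i:i\in\mathcal{I}\cup\{j\}]$ is principal, generated by a polynomial $m_{j,\mathcal{I}}$ of degree at most $\delta$. Since any irreducible annihilator $f_\mathcal{C}\in\G[X_i:i\in\mathcal{C}]\subset\G[X_i:i\in\mathcal{I}\cup\{j\}]$ lies in that ideal, $m_{j,\mathcal{I}}$ divides it, and irreducibility forces them to agree up to a constant, giving the degree bound. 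You instead project directly onto the $\mathcal{C}$-coordinates, note that $V_\mathcal{C}:=\overline{\pi_\mathcal{C}(V)}$ is an absolutely irreducible hypersurface in $\A^{\mathcal{C}}_{\overline\G}$ (since its dimension is $r(\mathcal{C})=|\mathcal{C}|-1$), take $f_\mathcal{C}$ to be its $\G$-defining equation, and bound its degree by proving that a linear coordinate projection does not increase degree — a Bézout-plus-component-count argument that closely parallels the proof of Lemma~\ref{lemma: degree bound V_G} in the paper. Both constructions yield the same polynomial up to a scalar, and both degree bounds are $\delta$. Your version is self-contained (no appeal to Schost) and arguably more transparent, since $f_\mathcal{C}$ is realized directly as a hypersurface equation; the cost is that you must make the genericity of the slicing linear space $L$ rigorous (it must simultaneously realize $\deg V_\mathcal{C}$ as $|V_\mathcal{C}\cap L|$ and avoid the boundary $V_\mathcal{C}\setminus\pi_\mathcal{C}(V)$, both open-dense conditions on the Grassmannian, so this is fine), which is exactly the technical point Schost's proposition encapsulates in the paper's route.
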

\begin{proof}
By definition, the polynomials $(f_i)_{i\in\mathcal{C}}$ are
algebraically dependent. Therefore, there exists a nonzero
polynomial $f_\mathcal{C} \in \overline{\G}[X_i : i \in
\mathcal{C}]$ such that $f_\mathcal{C}((f_i)_{i \in \mathcal{C}}) =
0$. Without loss of generality we may take $f_\mathcal{C}$ in
$\G[X_i : i \in \mathcal{C}]$, and irreducible in $\overline{\G}[X_i
: i \in \mathcal{C}]$. As $\mathcal{C}$ is a circuit, the set
$\{f_i:i\in\mathcal{C}\setminus\{j\}\}$ is algebraically
independent. It follows that $f_\mathcal{C}$ cannot be a polynomial
in $\G[X_i : i \in \mathcal{C}\setminus\{j\}]$.

To prove the degree bound for $f_\mathcal{C}$, note that since the
set $\mathcal{C}\setminus\{j\}$ is independent, we may extend it to
a base $\mathcal{I}\subset\mathcal{E}$ of ${\sf M}$. We have
$|\mathcal{I}|=m=r(\mathcal{E})=\dim V$, and
$\{X_i:i\in\mathcal{I}\}$ is a transcendence basis of the field
extension $\G\hookrightarrow\G(V)$. In particular, we have the field
extension $\G(X_i:i\in\mathcal{I})\hookrightarrow\G(V)$. This
implies that the morphism $\pi_\mathcal{I}:V\to\A^m_{\overline{\G}}$
defined by $\pi_\mathcal{I}(\bfs x):=(x_i:i\in\mathcal{I})$ is
dominant (see, e.g., \cite[\S 3.3, Exercise 6]{Kunz85}). Applying
\cite[Proposition 1]{Schost03}, we conclude that there exists a
nonzero polynomial
$m_{j,\mathcal{I}}\in\G[X_i:i\in\mathcal{I}\cup\{j\}]\setminus
\G[X_i:i\in\mathcal{I}]$ of minimal degree with $\deg
m_{j,\mathcal{I}}\le \delta$ such that
$$m_{j,\mathcal{I}}(X_i:i\in\mathcal{I}\cup\{j\})=0\quad \textrm{in }\G[V].$$
More precisely, the ideal $I_\G\cap\G[X_i:i\in\mathcal{I}\cup\{j\}]$
is principal and $m_{j,\mathcal{I}}$ is a generator of this ideal.
As $f_\mathcal{C}$ also belongs to
$I_\G\cap\G[X_i:i\in\mathcal{I}\cup\{j\}]$, we conclude that
$m_{j,\mathcal{I}}$ divides $f_\mathcal{C}$ in
$\G[X_i:i\in\mathcal{I}\cup\{j\}]$. As
$f_\mathcal{C}$ also belongs to $\G[X_i:i\in\mathcal{C}\cup\{j\}]$ and it
is irreducible, we conclude that $m_{j,\mathcal{I}}$ also belongs to
$\G[X_i:i\in\mathcal{C}\cup\{j\}]$ and differs from $f_\mathcal{C}$
by a nonzero constant in $\G$. This proves the degree bound.
%
%For $j\in\mathcal{C}$, $f_\mathcal{C}$ can be seen
%as a polynomial in the indeterminate $X_j$ of
%degree $d_{j,\mathcal{C}}\le d^m$, whose coefficients are polynomials
%in $X_j$ $(j\in J)$, where $J:= \mathcal{C}\setminus\{j\}$.
\end{proof}

\begin{example}[Gap between extrinsic bounds and
the intrinsic bound of Proposition \ref{prop: degree bound f_C}]
Let $\G$ be a perfect field, let $n\ge 2$ and consider the polynomial morphism
\[
\varphi:\A^n_{\overline{\G}} \longrightarrow \A^{n+1}_{\overline{\G}},\qquad
\bfs x:=(x_1,\dots,x_n) \longmapsto \big(x_1,\sigma_1(\bfs x),\ldots,\sigma_n(\bfs x)\big),
\]
where $\sigma_j$ denotes the $j$th elementary symmetric polynomial
in $n$ variables.

Let $V:=\overline{\varphi(\A^n_{\overline{\G}})}\subset \A^{n+1}_{\overline{\G}}$
denote the Zariski closure of the image of $\varphi$.
Consider the algebraic matroid ${\sf M}$ on the ground set $\mathcal{E}:=\{1,\dots,n+1\}$
defined by the algebraic dependencies among the coordinate functions of
$\varphi$.

The associated variety of ${\sf M}$ is $V$, which
is the hypersurface defined by the classical relation
$$x_1^n-\sigma_1(\bfs x)x_1^{n-1}+\cdots+(-1)^n\sigma_n(\bfs x)=0,$$
of degree $\deg(V) =n$. On the other hand,
the extrinsic bound of \cite[Corollary 6]{BeMiSa13} applied to the parametrization
$\varphi$ gives $\deg(V)\le n^n$, since the map $\varphi$ is defined by $n+1$ polynomials
of degree at most $n$, and the hypersurface $V$ has dimension $n$.
From the perspective of our entropy bounds, this example is particularly
illustrative: the actual degree of the annnihilating polynomial of the only circuit
of the matroid, namely $\{1,\ldots,n+1\}$, is $n$, while the extrinsic (optimal in general) bound
of \cite[Corollary 6]{BeMiSa13} grows exponentially in $n$.

This example shows that our entropy estimates, expressed intrinsically in terms
of $\deg(V)$, can be considerably sharper than those derived from extrinsic degree bounds.
\end{example}

In summary, this section established the algebraic, geometric, and
arithmetic framework underlying algebraic matroids. We derived explicit
degree bounds (Lem\-ma~\ref{lemma: degree bound V_G}) for
the associated variety $V$ and obtained
estimates concerning its number of rational points over a finite field
(Lemmas~\ref{lemma: condition V_F is nonempty} and
\ref{lemma: explicit Lang-Weil}). Moreover, we analyzed circuits and
proved that their algebraic dependencies admit intrinsic degree bounds
in terms of the degree of $V$ (Proposition~\ref{prop: degree bound f_C}).
These results provide the structural foundation for the entropic
approximation of algebraic matroids developed in the next section.

% ----------------------------------------------------------------
% ----------------------------------------------------------------
% ----------------------------------------------------------------
% ----------------------------------------------------------------

\section{Quantitative entropic approximation of algebraic matroids}
\label{section: main}
The connection between algebraic matroids and entropic polymatroids
is of particular importance in information theory. Entropic
polymatroids arise naturally as entropy functions of collections of
random variables, and thus capture fundamental constraints in
communication, coding, and secret sharing. Showing that algebraic
matroids are almost entropic establishes that combinatorial
independence relations of algebraic origin can be asymptotically
realized by information-theoretic models. This perspective provides
a bridge between algebraic geometry and information theory, and
motivates the study of quantitative refinements, where one seeks
explicit error bounds in the approximation of rank functions by
entropic polymatroids.

Let ${\sf M}:= (\mathcal{E}, r)$ be an algebraic matroid over a
finite field $\G$ as in Section \ref{section: geometry of algebraic
matroids}, i.e., there exist elements $(f_i)_{i \in \mathcal{E}}$ in
a purely transcendental field extension $\mathbb{H}$ of $\G$ such
that the rank function $r$ is given by
\[
r(\mathcal{I}) := \deg_{\text{tr}/\G}(f_i : i \in \mathcal{I})
\]
for all subsets $\mathcal{I} \subset \mathcal{E}$.

A construction due to~\cite{Matus24} (see also \cite{BeFaMo25}) shows
that algebraic matroids are almost entropic. More precisely, the
author exhibits a family of entropic polymatroids whose entropies
approximate the rank function $r$ of ${\sf M}$. However,
several open questions remain concerning this proof, which we will
address in the following.

We begin with a description of the construction of \cite{Matus24}.

% ----------------------------------------------------------------
% ----------------------------------------------------------------
%
\subsection{An entropic polymatroid}
Let $V\subset\A^n_{\overline{\G}}$ be the variety associated with
the algebraic matroid ${\sf M}:=(\mathcal{E},r)$ as in Section
\ref{section: geometry of algebraic matroids}. Let $\F$ be a finite
field extension of $\G$ such that the set of $\F$-rational points
$V(\F)$ of $V$ is nonempty (for example, satisfying the hypothesis
of Lemma \ref{lemma: condition V_F is nonempty}). Define a random
variable $S:= (S_i)_{i \in \mathcal{E}} \in \F^\mathcal{E}$ taking
values in $V(\F)$, where each $\bfs z \in V(\F)$ is chosen with
equal probability, that is,
\[
\mathbb{P}(S=\bfs z) = \frac{1}{|V(\F)|}, \quad \text{for all } \bfs
z \in V(\F).
\]
For each $\mathcal{I} \subset \mathcal{E}$, let $S_\mathcal{I} :=
(S_i)_{i \in \mathcal{I}}$ denote the projection of $S$ onto the
coordinates indexed by $\mathcal{I}$. This defines a probability
distribution $\mathbb{P}_\mathcal{I}$ on $\F^\mathcal{I}$. Define a
function $h_\F: 2^\mathcal{E} \to \mathbb{R}_{\geq 0}$ by
\begin{equation}\label{eq: definition polymatroid}
h_\F(\mathcal{I}) := \frac{H(S_\mathcal{I})}{\ln |\F|},
\end{equation}
where $H(S_\mathcal{I})$ denotes the Shannon entropy of the random
variable $S_\mathcal{I}$, and $\ln$ denotes the logarithm to the
base $e$. The function $h_\F$ is normalized, monotone, and
submodular; hence it is a polymatroid. Since it is defined via
entropies of actual random variables, $h_\F$ is an entropic
polymatroid.

It is shown in~\cite{Matus24} (see also \cite{BeFaMo25}) that, as
$|\F| \to \infty$, the function $h_\F$ converges pointwise to $r$,
in the sense that for all $\mathcal{A} \subset \mathcal{E}$,
\begin{equation}\label{eq: convergence polymatroid}
\left| h_\F(\mathcal{A}) - r(\mathcal{A}) \right| \leq
\frac{\mathfrak{n}}{\ln |\F|},
\end{equation}
for some constant $\mathfrak{n}$ independent of $q:=|\F|$. In
particular, the matroid ${\sf M}$ is almost entropic: its rank
function is the pointwise limit of entropic polymatroids.

Nevertheless, several questions remain open in this
proof. First, the size of the field $\F$ such that the set $V(\F)$
is nonempty is not known, and this size defines later the first field
where the sequence of polymatroids is defined.

Moreover, an explicit constant $\mathfrak{n}$ in \eqref{eq:
convergence polymatroid} would provide a more precise understanding of the
approximation of the rank function of the polymatroids $h_\F$, and
any improvement of this bound would directly sharpen the rate of convergence of
the polymatroids to the original matroid.

Our purpose is to address these issues, providing explicit
expressions for the constants underlying this construction in terms
of geometric invariants associated to the variety $V$, which can be
effectively computed from the polynomials $(f_i:i\in\mathcal{E})$.

% ----------------------------------------------------------------
% ----------------------------------------------------------------
% ----------------------------------------------------------------
% ----------------------------------------------------------------

\subsection{Explicit bounds on the convergence rate}

From now on, let $V\subset\A^n_{\overline{\G}}$ be the variety
associated to the matroid ${\sf M}:=(\mathcal{E},r)$ under consideration,
and let $\F$ be a finite field extending $\G$ and satisfying the hypothesis of
Lemma \ref{lemma: explicit Lang-Weil}. Denote by $\delta:=\deg(V)$. We shall prove that the
polymatroids defined in \eqref{eq: definition polymatroid} converge
to the algebraic matroid ${\sf M}$ under consideration when the
cardinality $q:=|\F|$ tends to infinity. Furthermore, we
provide explicit estimates for the convergence rate.

We start with the analysis of independent sets
$\mathcal{I}\subset\mathcal{E}$.

% ----------------------------------------------------------------
% ----------------------------------------------------------------

\subsubsection{Independent sets}
Let $\mathcal{I}$ be an arbitrary subset of $\mathcal{E}$. For $\bfs
y_\mathcal{I}\in\pi_\mathcal{I}(V(\F))\subset\F^\mathcal{I}$, denote
by
$$\pi^{-1}_\mathcal{I}(\bfs y_\mathcal{I}):=\{\bfs x\in V(\F):
\pi_\mathcal{I}(\bfs x)=\bfs y_\mathcal{I}\}$$
its $\F$-fiber under
$\pi_\mathcal{I}$. The probability that
$\mathcal{S}_\mathcal{I}=\bfs y_\mathcal{I}$ is
$$\mathbb{P}_\mathcal{I}(\mathcal{S}_\mathcal{I}=\bfs y_\mathcal{I})=
\frac{|\pi^{-1}_\mathcal{I}(\bfs y_\mathcal{I})|}{|V(\F)|}.$$
As a consequence, the entropy of the marginal
probability $\mathbb{P}_\mathcal{I}$ can be written as
\begin{equation}\label{eq: entropy P_I}
H(\mathbb{P}_\mathcal{I})=
-\sum_{\bfs y_\mathcal{I}\in\pi_\mathcal{I}(V(\F))}
\frac{|\pi_\mathcal{I}^{-1}(\bfs y_\mathcal{I})|}{|V(\F)|}\ln
\frac{|\pi_\mathcal{I}^{-1}(\bfs y_\mathcal{I})|}{|V(\F)|}.
\end{equation}

Following the approach of \cite{Matus24}, we first consider the case of bases,
and then extend the estimates to arbitrary independent sets.
Let $\mathcal{I}\subset\mathcal{E}$ be a base of ${\sf M}$,
i.e., a maximal independent subset of $\mathcal{E}$. Let $j\in
\mathcal{E}\setminus \mathcal{I}$ and $\mathcal{C}_j:=
\gamma(j,\mathcal{I})$ denote the fundamental circuit of $j$ with
respect to $\mathcal{I}$. Let
$f_{\mathcal{C}_j}\in\G[X_i:i\in\mathcal{C}]\setminus\G[X_i:i\in\mathcal{I}]$
be the absolutely irreducible polynomial whose existence is guaranteed
by Proposition \ref{prop: degree bound f_C}.
In order to estimate the entropy expression in \eqref{eq: entropy P_I},
we split the sum distinguishing those $\bfs y_\mathcal{I}$
for which there exists $j\in\mathcal{E}\setminus
\mathcal{I}$ such that the specialization $f_{\mathcal{C}_j}(\bfs
y_\mathcal{I},X_j)$ vanishes identically.

Define
$$
W_j:=\{\bfs x\in V:f_{\mathcal{C}_j}(\pi_\mathcal{I}(\bfs x),X_j)\textrm{ is the zero polynomial}\},\quad
W:=\bigcup_{j\in\mathcal{E}\setminus\mathcal{I}}W_j.
$$
Let $\bfs y_\mathcal{I}\in \pi_\mathcal{I}(W(\F))$ and let
$j\in\mathcal{E} \setminus\mathcal{I}$ be such that
$f_{\mathcal{C}_j}(\bfs y_\mathcal{I},X_j)$ is identically zero.
Write $f_{\mathcal{C}_j}$ as a polynomial in
$\G[X_i:i\in\mathcal{I}][X_j]$:
$$f_{\mathcal{C}_j}=\sum_{h\in\mathcal{H}_j}m_hX_j^h,$$
where each $m_h\in\G[X_i:i\in\mathcal{I}]$ is nonzero. Let $h_j$ be
the largest element of $\mathcal{H}_j$. Then $W_j\subset
\{m_{h_j}=0\}\cap V$. Since $\mathcal{I}$ is an independent set, we
have $I_\G\cap \G[X_i:i\in\mathcal{I}]=\{0\}$. Thus $m_{h_j}$ is
neither a zero divisor nor zero in the quotient ring
$\G[X_1,\ldots,X_n]/I_\G$, and therefore $V\cap\{m_{h_j}=0\}$ is a
subvariety of pure codimension 1 of $V$ (see, e.g., \cite[Chapter V,
Corollary 3.2]{Kunz85}). It follows that
$$|W_{\mathcal{C}_j}(\F)|\le |V\cap\{m_{h_j}=0\}\cap\F^n|\le
\deg(V\cap\{m_{h_j}=0\})q^{\dim V-1}\le \delta^2\, q^{m-1},$$
where the second inequality follows from \eqref{eq: upper bound -- affine gral},
and the third from the B\'ezout inequality \eqref{eq: Bezout}.
Hence,
$$|\pi_\mathcal{I}^{-1}(\bfs y_{\mathcal{I}})|\le \delta^2\, q^{m-1}.$$
From this we deduce
\begin{align*}
\sum_{\bfs y_\mathcal{I}\in\pi_\mathcal{I}(W(\F))}
\frac{|\pi_\mathcal{I}^{-1}(\bfs y_{\mathcal{I}})|}{|V(\F)|}
\ln
\frac{|V(\F)|}{|\pi_\mathcal{I}^{-1}(\bfs y_{\mathcal{I}})|}
&\ge \sum_{\bfs y_\mathcal{I}\in\pi_\mathcal{I}(W(\F))}
\frac{|\pi_\mathcal{I}^{-1}(\bfs y_{\mathcal{I}})|}{|V(\F)|}
\ln
\frac{|V(\F)|}{\delta^2\, q^{m-1}}\\
&=
\frac{|W(\F)|}{|V(\F)|}
\ln
\frac{|V(\F)|}{\delta^2\,q^{m-1}}.
\end{align*}

On the other hand, for $\bfs y_{\mathcal{I}}\in
\pi_{\mathcal{I}}((V\setminus W)(\F))$ and any
$j\in\mathcal{E}\setminus \mathcal{I}$, the polynomial
$f_{\mathcal{C}_j}(\bfs y_{\mathcal{I}},X_j)$ is nonzero and has a
finite number of roots in $\overline{\F}$. Thus the fiber
$\pi_{\mathcal{I}}^{-1}(\bfs y_{\mathcal{I}})$ is finite. Moreover,
$$\pi_{\mathcal{I}}^{-1}(\bfs y_{\mathcal{I}})
=V(\F)\cap\{X_i=y_i\ (i\in\mathcal{I})\}\subset
V\cap\{X_i=y_i\ (i\in\mathcal{I})\}.$$
By definition of degree,
$$|\pi_{\mathcal{I}}^{-1}(\bfs y_{\mathcal{I}})|\le
\big|V\cap\{X_i=y_i\ (i\in\mathcal{I})\}\big|\le \delta.$$
Therefore,
\begin{align*}
  \sum_{y_{\mathcal{I}}\in
\pi_{\mathcal{I}}((V\setminus W)(\F))}
\frac{|\pi_{\mathcal{I}}^{-1}(\bfs y_{\mathcal{I}})|}{|V(\F)|}
\ln
\frac{|V(\F)|}{|\pi_{\mathcal{I}}^{-1}(\bfs y_{\mathcal{I}})|} &\ge
  \sum_{y_{\mathcal{I}}\in
\pi_{\mathcal{I}}((V\setminus W)(\F))}
\frac{|\pi_{\mathcal{I}}^{-1}(\bfs y_{\mathcal{I}})|}{|V(\F)|}
\ln
\frac{|V(\F)|}{\delta}\\
&=
\frac{|(V\setminus W)(\F)|}{|V(\F)|}
\ln
\frac{|V(\F)|}{\delta}.
\end{align*}
As a consequence,
\begin{align*}
H(\mathbb{P}_\mathcal{I})&\ge
\frac{|W(\F)|}{|V(\F)|}
\ln
\frac{|V(\F)|}{\delta^2\,q^{m-1}}+
\frac{|(V\setminus W)(\F)|}{|V(\F)|}
\ln
\frac{|V(\F)|}{\delta}\\&=
\ln
(|V(\F)|)- \ln(\delta^2)
-\frac{|W(\F)|}{|V(\F)|}
\ln
(q^{m-1})\\&=
m\ln q +
\ln(1-{5\delta^{7/3}}{q^{-1/2}})- \ln(\delta^2)
-\frac{n\delta^2}{q^{1/2}}
\ln
(q^{m-1}),
\end{align*}
where the lower bound for $\ln (|V(\F)|)$ comes from Lemma
\ref{lemma: explicit Lang-Weil}:
$$
\ln (|V(\F)|)
\ge\ln( q^m-5\delta^{7/3}q^{m-1/2})=
m\ln q +
\ln(1-{5\delta^{7/3}}{q^{-1/2}}).$$
Since $m:=r(\mathcal{E})
=|\mathcal{I}|=r(\mathcal{I})$, we obtain
$$0\ge h_\F(\mathcal{I})-r(\mathcal{I})=\frac{H(\mathbb{P}_\mathcal{I})}{\ln |\F|} - |\mathcal{I}|\ge
\frac{\ln(1-{5\delta^{7/3}}{q^{-1/2}})}{\ln q}- \frac{\ln(\delta^2)}{\ln q}
-\frac{nm\delta^2}{q^{1/2}}.
$$
Summarizing, we have the following result.
\begin{theorem}\label{th: estimate for base}
Let ${\sf M}:=(\mathcal{E},r)$ be an algebraic matroid defined over
a finite field $\G$, let $V\subset\A^n_{\overline{\G}}$ be the
associated variety, and let $\F$ be a finite field extension of $\G$
as in Lemma \ref{lemma: explicit Lang-Weil}. For a base
$\mathcal{I}\subset \mathcal{E}$, with $n:=|\mathcal{E}|$,
$m:=r(\mathcal{E})$, $\delta:=\deg(V)$ and $q:=|\F|$, we have
$$|h_\F(\mathcal{I})-r(\mathcal{I})|\le
-\frac{\ln(1-{5\delta^{7/3}}{q^{-1/2}})}{\ln q}+\frac{\ln(\delta^{2})}{\ln q}
+\frac{nm\delta^{2}}{q^{1/2}}.$$
\end{theorem}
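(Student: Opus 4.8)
The statement packages the estimate obtained in the analysis of bases that precedes it, so the plan is to assemble that computation into a clean two-sided bound; the work divides into an easy upper half and a substantive lower half. For the upper bound I would use only that $S_\mathcal{I}$ takes values in the finite set $\F^\mathcal{I}$ of cardinality $q^m$, so that $H(S_\mathcal{I})\le m\ln q$ and hence $h_\F(\mathcal{I})\le m=|\mathcal{I}|=r(\mathcal{I})$; this yields $h_\F(\mathcal{I})-r(\mathcal{I})\le 0$ with no constraint on $q$, and reduces matters to a matching lower bound on $H(\mathbb{P}_\mathcal{I})$.

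For the lower bound I would start from the fiber expansion \eqref{eq: entropy P_I} of the marginal entropy and partition the base points $\bfs y_\mathcal{I}\in\pi_\mathcal{I}(V(\F))$ according to whether $\bfs y_\mathcal{I}\in\pi_\mathcal{I}(W(\F))$, where $W=\bigcup_{j\in\mathcal{E}\setminus\mathcal{I}}W_j$ collects the ``degenerate'' directions, namely those $\bfs y_\mathcal{I}$ for which some fundamental-circuit annihilator $f_{\mathcal{C}_j}$ (supplied by Proposition \ref{prop: degree bound f_C}) specializes to the zero polynomial in $X_j$. On the complement of $W$ each fiber is the intersection of $V$ with the linear space $\{X_i=y_i:i\in\mathcal{I}\}$ of codimension $\dim V=m$, which is finite (all the $f_{\mathcal{C}_j}(\bfs y_\mathcal{I},X_j)$ being nonzero univariate polynomials), hence of size at most $\delta$ by definition of degree; on $W$ the fiber sits inside $V\cap\{m_{h_j}=0\}$, where $m_{h_j}$ is the leading coefficient of $f_{\mathcal{C}_j}$ regarded in $\G[X_i:i\in\mathcal{I}][X_j]$, and the same counting bounds it by $\delta^2 q^{m-1}$. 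Feeding these two term-by-term fiber bounds into \eqref{eq: entropy P_I} and using that the two resulting weights add up to $1$ gives
$$H(\mathbb{P}_\mathcal{I})\ge \ln|V(\F)|-\ln(\delta^2)-\frac{|W(\F)|}{|V(\F)|}\,(m-1)\ln q.$$
I would then invoke Lemma \ref{lemma: explicit Lang-Weil}, which provides simultaneously $\ln|V(\F)|\ge m\ln q+\ln(1-5\delta^{7/3}q^{-1/2})$ and $|V(\F)|>q^{m-1/2}$, together with the union bound $|W(\F)|\le n\,\delta^2 q^{m-1}$ (there being at most $n$ circuits $\mathcal{C}_j$, each $W_j$ a codimension-$1$ section of $V$ of degree at most $\delta^2$ by \eqref{eq: Bezout} and \eqref{eq: upper bound -- affine gral}); dividing by $\ln q$ and subtracting $m=r(\mathcal{I})$ produces the stated lower bound, and combining it with $h_\F(\mathcal{I})-r(\mathcal{I})\le 0$ gives the absolute-value estimate.

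The main obstacle is the geometric input behind ``$W_j$ has codimension $1$ in $V$ and degree at most $\delta^2$'': one needs the leading coefficient $m_{h_j}\in\G[X_i:i\in\mathcal{I}]$ to be nonzero and a non--zero-divisor modulo $I_\G$. Both follow from $\mathcal{I}$ being independent, which forces $I_\G\cap\G[X_i:i\in\mathcal{I}]=\{0\}$, combined with the fact that $f_{\mathcal{C}_j}$ genuinely involves $X_j$ (as $f_{\mathcal{C}_j}\notin\G[X_i:i\in\mathcal{I}]$, again by Proposition \ref{prop: degree bound f_C}), so that $m_{h_j}$ is a proper ``coefficient'' of $f_{\mathcal{C}_j}$ rather than $f_{\mathcal{C}_j}$ itself; Krull's principal ideal theorem, in the form \cite[Chapter V, Corollary 3.2]{Kunz85}, then yields pure codimension $1$, and \eqref{eq: Bezout} the degree. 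A secondary point to watch is the sign bookkeeping: under the hypothesis $q>(5\delta^{7/3}+1)^2$ one has $1-5\delta^{7/3}q^{-1/2}\in(0,1)$, so $\ln(1-5\delta^{7/3}q^{-1/2})<0$ and the first term of the bound is genuinely positive, and one needs $(m-1)\ln q\ge 0$ for the bound on $|W(\F)|/|V(\F)|$ to translate into a valid lower bound on $H(\mathbb{P}_\mathcal{I})$.
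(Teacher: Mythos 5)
Your proposal is correct and follows essentially the same route as the paper: the same fiber expansion \eqref{eq: entropy P_I}, the same partition of $\pi_\mathcal{I}(V(\F))$ into the degenerate locus $\pi_\mathcal{I}(W(\F))$ and its complement, the same per-fiber bounds ($\delta$ generically, $\delta^2 q^{m-1}$ over $W$) justified via the leading coefficient $m_{h_j}$ being a non--zero-divisor modulo $I_\G$, and the same invocation of Lemma~\ref{lemma: explicit Lang-Weil} together with the union bound on $|W(\F)|$. You are in fact a bit more careful than the paper in two small respects: you supply the (omitted but trivial) upper bound $h_\F(\mathcal{I})\le m$, and you correctly write the weighted entropy estimate as an inequality rather than the chain of equalities the paper displays, since $a\ln\delta^2+b\ln\delta\le\ln\delta^2$ when $a+b=1$ rather than equality.
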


As in \cite{Matus24}, once the case of bases has been established,
the extension to arbitrary independent sets follows by exploiting
the submodularity of entropy.
In our setting, this allows us to transfer the explicit bounds obtained
for bases to any independent set, thereby refining Mat\'u$\check{\mathrm{s}}$’ original
approximation scheme with effective constants depending only on
the degree and dimension of the associated variety $V$.

Let $\mathcal{J}\subset
\mathcal{E}$ be independent but not a base, and let
$\mathcal{I}$ be a base containing $\mathcal{J}$.
By the submodularity of entropy,
\begin{equation}\label{eq: submodularity entropy}
H(\mathbb{P}_\mathcal{I}) \le H(\mathbb{P}_\mathcal{J})
+ H(\mathbb{P}_{\mathcal{I} \setminus \mathcal{J}}).
\end{equation}
As $\mathbb{P}_{\mathcal{I} \setminus \mathcal{J}}$ has support
$\F^{\mathcal{I} \setminus \mathcal{J}}$, it follows that
$H(\mathbb{P}_{\mathcal{I} \setminus \mathcal{J}})\le
\ln|\F^{\mathcal{I} \setminus \mathcal{J}}| = |\mathcal{I}\setminus
\mathcal{J}|\ln|\F|$. Therefore, dividing by $\ln|\F|$ and
subtracting $|\mathcal{I}|$ in \eqref{eq: submodularity entropy},
and using that $r(\mathcal{J}) = |\mathcal{J}|$,
we obtain
\begin{align*}
0\ge \frac{H(\mathbb{P}_\mathcal{J})}{\ln \F}-r(\mathcal{J})&\ge
\frac{H(\mathbb{P}_\mathcal{I})}{\ln \F}-|\mathcal{J}|-|\mathcal{I}\setminus \mathcal{J}|
=h_\F(\mathcal{I})-r(\mathcal{I}).
\end{align*}
From Theorem \ref{th: estimate for base} we deduce the following
corollary.
\begin{corollary}
For any independent set $\mathcal{J} \subset\mathcal{E}$, with hypotheses
and notations as in
Theorem \ref{th: estimate for base}, the following estimate holds:
$$|h_\F(\mathcal{J})-r(\mathcal{J})|\le
-\frac{\ln(1-{5\delta^{7/3}}{q^{-1/2}})}{\ln q}+\frac{\ln(\delta^{2})}{\ln q}
+\frac{nm\delta^{2}}{q^{1/2}}.$$
\end{corollary}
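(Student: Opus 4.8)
The plan is to deduce this corollary directly from Theorem~\ref{th: estimate for base} by the submodularity argument that was already sketched in the paragraph preceding the statement, so the main work is to check that the bound for bases really does transfer verbatim to an arbitrary independent set. First I would fix an independent set $\mathcal{J}\subset\mathcal{E}$ that is not a base (the case $\mathcal{J}=\emptyset$ being trivial and the case where $\mathcal{J}$ is a base being exactly Theorem~\ref{th: estimate for base}), and choose a base $\mathcal{I}\subset\mathcal{E}$ with $\mathcal{J}\subset\mathcal{I}$; such a base exists since every independent set extends to a base. Then I would invoke submodularity of the entropic polymatroid $h_\F$, in the form $H(\mathbb{P}_\mathcal{I})\le H(\mathbb{P}_\mathcal{J})+H(\mathbb{P}_{\mathcal{I}\setminus\mathcal{J}})$, which holds because $h_\F$ is a polymatroid and $\mathcal{I}=\mathcal{J}\cup(\mathcal{I}\setminus\mathcal{J})$ with $\mathcal{J}\cap(\mathcal{I}\setminus\mathcal{J})=\emptyset$, so $h_\F(\mathcal{I})+h_\F(\emptyset)\le h_\F(\mathcal{J})+h_\F(\mathcal{I}\setminus\mathcal{J})$.

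Next I would bound $H(\mathbb{P}_{\mathcal{I}\setminus\mathcal{J}})$ from above by the log of the cardinality of its support. The key observation is that, because $\mathcal{I}$ is independent and $\mathcal{I}\setminus\mathcal{J}\subset\mathcal{I}$ is therefore also independent, the projection $\pi_{\mathcal{I}\setminus\mathcal{J}}$ restricted to $V(\F)$ is surjective onto $\F^{\mathcal{I}\setminus\mathcal{J}}$ for $q$ large (this uses that an independent set of coordinates forms a transcendence basis of a subfield, so the corresponding projection $V\to\A^{|\mathcal{I}\setminus\mathcal{J}|}$ is dominant and, by Lemma~\ref{lemma: explicit Lang-Weil} applied in that smaller ambient space, hits every $\F$-point). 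Consequently $\mathbb{P}_{\mathcal{I}\setminus\mathcal{J}}$ has support all of $\F^{\mathcal{I}\setminus\mathcal{J}}$ and the uniform distribution maximizes entropy, giving $H(\mathbb{P}_{\mathcal{I}\setminus\mathcal{J}})\le |\mathcal{I}\setminus\mathcal{J}|\ln|\F|$. Dividing the submodularity inequality by $\ln|\F|$ yields $h_\F(\mathcal{I})\le h_\F(\mathcal{J})+|\mathcal{I}\setminus\mathcal{J}|$, and since $r$ is itself a (poly)matroid rank function with $r(\mathcal{I})=|\mathcal{I}|$ and $r(\mathcal{J})=|\mathcal{J}|$ we get $r(\mathcal{I})=r(\mathcal{J})+|\mathcal{I}\setminus\mathcal{J}|$; subtracting gives $h_\F(\mathcal{I})-r(\mathcal{I})\le h_\F(\mathcal{J})-r(\mathcal{J})$.

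To finish I would combine this with the one-sided bound $h_\F(\mathcal{J})-r(\mathcal{J})\le 0$, which holds for every independent set because the fibers of $\pi_\mathcal{J}$ are nonempty of size at least one and hence the entropy $H(\mathbb{P}_\mathcal{J})$ is at most $\ln|V(\F)|\le\ln(\delta q^m)$ — more simply, $h_\F(\mathcal{J})\le h_\F(\mathcal{I})\le |\mathcal{I}|$ by monotonicity, but the sharp statement we need is that the chain $h_\F(\mathcal{I})-r(\mathcal{I})\le h_\F(\mathcal{J})-r(\mathcal{J})\le 0$ forces $|h_\F(\mathcal{J})-r(\mathcal{J})|\le |h_\F(\mathcal{I})-r(\mathcal{I})|$. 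Then Theorem~\ref{th: estimate for base} supplies the explicit upper bound for $|h_\F(\mathcal{I})-r(\mathcal{I})|$, namely $-\ln(1-5\delta^{7/3}q^{-1/2})/\ln q+\ln(\delta^2)/\ln q+nm\delta^2 q^{-1/2}$, and since this bound depends only on $n,m,\delta,q$ and not on the particular base, it bounds $|h_\F(\mathcal{J})-r(\mathcal{J})|$ as claimed.

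I expect the main obstacle to be the justification that $h_\F(\mathcal{J})\le r(\mathcal{J})$ for every independent $\mathcal{J}$, i.e.\ the upper-entropy side, since the detailed computation in the main text only establishes the lower bound $H(\mathbb{P}_\mathcal{I})\ge\cdots$ for bases. The cleanest route is to note that for independent $\mathcal{J}$ every nonempty $\F$-fiber of $\pi_\mathcal{J}$ contains at least one point, the support of $\mathbb{P}_\mathcal{J}$ has at most $q^{|\mathcal{J}|}=q^{r(\mathcal{J})}$ elements (it is contained in $\pi_\mathcal{J}(V(\F))\subset\F^{\mathcal{J}}$), and hence $H(\mathbb{P}_\mathcal{J})\le\ln(q^{r(\mathcal{J})})=r(\mathcal{J})\ln q$, which after dividing by $\ln q$ gives $h_\F(\mathcal{J})\le r(\mathcal{J})$; this is exactly the inequality used implicitly in the displayed chain $0\ge H(\mathbb{P}_\mathcal{J})/\ln\F-r(\mathcal{J})\ge h_\F(\mathcal{I})-r(\mathcal{I})$ just above the corollary, so no new ideas beyond the excerpt are needed.
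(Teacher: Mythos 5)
Your proposal is correct and follows essentially the same route as the paper: extend $\mathcal{J}$ to a base $\mathcal{I}$, use submodularity $H(\mathbb{P}_\mathcal{I})\le H(\mathbb{P}_\mathcal{J})+H(\mathbb{P}_{\mathcal{I}\setminus\mathcal{J}})$, bound the last term by $|\mathcal{I}\setminus\mathcal{J}|\ln|\F|$, and combine with the trivial upper bound $h_\F(\mathcal{J})\le r(\mathcal{J})$ to sandwich $h_\F(\mathcal{J})-r(\mathcal{J})$ between $h_\F(\mathcal{I})-r(\mathcal{I})$ and $0$, then invoke Theorem~\ref{th: estimate for base}. One small note: the detour arguing that $\pi_{\mathcal{I}\setminus\mathcal{J}}$ is \emph{surjective} onto $\F^{\mathcal{I}\setminus\mathcal{J}}$ (via Lemma~\ref{lemma: explicit Lang-Weil}) is both unnecessary and not really what that lemma gives; all that is needed is the trivial inclusion of the support in $\F^{\mathcal{I}\setminus\mathcal{J}}$, which already yields $H(\mathbb{P}_{\mathcal{I}\setminus\mathcal{J}})\le|\mathcal{I}\setminus\mathcal{J}|\ln|\F|$.
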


% ----------------------------------------------------------------
% ----------------------------------------------------------------

\subsubsection{Dependent sets}
In the previous subsection we obtained upper bounds for the
deviation $|h_\F(\mathcal{I})-r(\mathcal{I})|$ when $\mathcal{I}$ is
an independent set. We now turn to the case of dependent sets.
Following the approach of \cite{Matus24}, we first focus on the
fundamental case of circuits: we establish a sharp bound for the
conditional entropy $H(\mathbb{P}_j|\mathbb{P}_\mathcal{J})$ when
$\{j\}\cup\mathcal{J}$ forms a circuit. This estimate is then used to
control the excess $h_\F(\mathcal{C})-r(\mathcal{C})$ for any circuit
$\mathcal{C}$, and subsequently extended to arbitrary dependent sets.
Our contribution lies in making this strategy quantitative, by
deriving a uniform explicit bound for the deviation
$|h_\F(\mathcal{K})-r(\mathcal{K})|$ valid for every subset
$\mathcal{K}\subset\mathcal{E}$, expressed in terms of $\delta$, $n$,
and $q$.

Consider a circuit $\mathcal{C}\subset\mathcal{E}$.
According to Proposition \ref{prop: degree bound f_C}, there exists a
nonzero absolutely irreducible polynomial $f_\mathcal{C} \in \G[X_i
: i \in \mathcal{C}]$ of degree at most $\delta$ such that
\[
f_\mathcal{C}((f_i)_{i \in \mathcal{C}}) = 0.
\]
For $j\in\mathcal{C}$, $f_\mathcal{C}$ may be viewed as a polynomial
in the indeterminate $X_j$ of degree $d_{j,\mathcal{C}}\le \delta$,
with coefficients in $\G[X_i:i\in \mathcal{J}]$,
where $\mathcal{J}:= \mathcal{C}\setminus\{j\}$.
In this setting we have
\[
H(\mathbb{P}_\mathcal{C}) =
H(\mathbb{P}_\mathcal{J},\mathbb{P}_{\{j\}})=
H(\mathbb{P}_\mathcal{J})+H(\mathbb{P}_{\{j\}}|\mathbb{P}_\mathcal{J}).
\]
We denote $\mathbb{P}_j:=\mathbb{P}_{\{j\}}$. By definition,
\begin{align*}
&H(\mathbb{P}_j|\mathbb{P}_\mathcal{J})\\\ &=-\sum_{\bfs y_\mathcal{J}\in\pi_\mathcal{J}(V(\F))}
\mathbb{P}_\mathcal{J}(S_\mathcal{J}=\bfs y_\mathcal{J})
\sum_{y_j\in\pi_j(V(\F))}
\mathbb{P}_j(S_j=y_j|S_\mathcal{J}=\bfs y_\mathcal{J})\ln \mathbb{P}_j(S_j=y_j|S_\mathcal{J}=\bfs y_\mathcal{J})\\
\ &=-\sum_{\bfs y_\mathcal{C}\in\pi_\mathcal{C}(V(\F))}
\mathbb{P}_\mathcal{C}(S_\mathcal{C}=\bfs y_\mathcal{C})
\ln \frac{\mathbb{P}_\mathcal{C}(S_\mathcal{C}=\bfs y_\mathcal{C})}
{\mathbb{P}_\mathcal{J}(S_\mathcal{J}=\pi_\mathcal{J}(\bfs y_\mathcal{C}))}
\end{align*}

Observe that
$$
\mathbb{P}_\mathcal{C}(S_\mathcal{C}=\bfs y_\mathcal{C})=
\frac{|\pi_\mathcal{C}^{-1}(\bfs y_\mathcal{C})|}{|V(\F)|},\quad
{\mathbb{P}_\mathcal{J}(S_\mathcal{J}=\pi_\mathcal{J}(\bfs y_\mathcal{C}))}=
\frac{|\pi_\mathcal{J}^{-1}(\pi_\mathcal{J}(\bfs y_\mathcal{C}))|}{|V(\F)|}.
$$
Therefore,
\begin{align}
H(\mathbb{P}_j|\mathbb{P}_\mathcal{J})
&=-\sum_{\bfs y_\mathcal{C}\in\pi_\mathcal{C}(V(\F))}
\frac{|\pi_\mathcal{C}^{-1}(\bfs y_\mathcal{C})|}{|V(\F)|}\ln
\frac{|\pi_\mathcal{C}^{-1}(\bfs y_\mathcal{C})|}{|\pi_\mathcal{J}^{-1}(\pi_\mathcal{J}(\bfs y_\mathcal{C}))|}
\notag\\
&=\sum_{\bfs y_\mathcal{J}\in\pi_\mathcal{J}(V(\F))}
\frac{|\pi_\mathcal{J}^{-1}(\bfs y_\mathcal{J})|}{|V(\F)|}
\sum_{y_j\in\pi_j(V(\F))} \frac{|\pi_\mathcal{C}^{-1}(\bfs
y_\mathcal{J},y_j)|}{|\pi_\mathcal{J}^{-1}(\bfs y_\mathcal{J})|} \ln
\frac{|\pi_\mathcal{J}^{-1}(\bfs
y_\mathcal{J})|}{|\pi_\mathcal{C}^{-1}(\bfs y_\mathcal{J},y_j)|}.
\label{eq: first expression conditional entropy}
\end{align}

To bound $H(\mathbb{P}_j|\mathbb{P}_\mathcal{J})$, we analyze the
inner sum for each $\bfs y_\mathcal{J}$, distinguishing whether the
specialization $f_\mathcal{C}(\bfs y_\mathcal{J},X_j)$ vanishes
identically or not. Denote by $W_{j,\mathcal{C}}$ the set of $\bfs
x\in V$ for which $f_\mathcal{C}(\pi_\mathcal{J}(\bfs x),X_j)$ is
the zero polynomial.
\begin{lemma}\label{lemma: upper bound inner sum}
For every $\bfs y_\mathcal{J}\in\pi_\mathcal{J}(V(\F))$, with
$d_{j,\mathcal{C}}:=\deg f_{\mathcal{C}_j}$, we have
$$
\sum_{y_j\in\pi_j(V(\F))} \frac{|\pi_\mathcal{C}^{-1}(\bfs
y_\mathcal{J},y_j)|}{|\pi_\mathcal{J}^{-1}(\bfs y_\mathcal{J})|} \ln
\frac{|\pi_\mathcal{J}^{-1}(\bfs
y_\mathcal{J})|}{|\pi_\mathcal{C}^{-1}(\bfs y_\mathcal{J},y_j)|}\le
\left\{\begin{array}{l}
\ln q \textrm{\quad for }\bfs y_\mathcal{J}\in\pi_\mathcal{J}(W_{j,\mathcal{C}}(\F)),  \\[0.3em]
 \ln d_{j,\mathcal{C}}  \textrm{\quad otherwise}.%\bfs y_\mathcal{J}\notin\pi_\mathcal{J}(W_{j,\mathcal{C}}(\F)).
\end{array}\right.
$$
\end{lemma}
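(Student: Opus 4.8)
The plan is to bound the inner sum $\Sigma(\bfs y_\mathcal{J}) := \sum_{y_j} \frac{|\pi_\mathcal{C}^{-1}(\bfs y_\mathcal{J},y_j)|}{|\pi_\mathcal{J}^{-1}(\bfs y_\mathcal{J})|} \ln \frac{|\pi_\mathcal{J}^{-1}(\bfs y_\mathcal{J})|}{|\pi_\mathcal{C}^{-1}(\bfs y_\mathcal{J},y_j)|}$ by recognizing it as the Shannon entropy of a probability distribution on the fiber $\pi_j\big(\pi_\mathcal{J}^{-1}(\bfs y_\mathcal{J})\big)$. Indeed, for fixed $\bfs y_\mathcal{J}$, the quantities $p(y_j) := |\pi_\mathcal{C}^{-1}(\bfs y_\mathcal{J},y_j)| / |\pi_\mathcal{J}^{-1}(\bfs y_\mathcal{J})|$ are nonnegative and sum to $1$ (since the fibers $\pi_\mathcal{C}^{-1}(\bfs y_\mathcal{J},y_j)$ for varying $y_j$ partition $\pi_\mathcal{J}^{-1}(\bfs y_\mathcal{J})$), so $\Sigma(\bfs y_\mathcal{J}) = -\sum_{y_j} p(y_j)\ln p(y_j)$ is exactly the entropy of this conditional law. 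The key general fact is that the entropy of a distribution supported on a set of cardinality $N$ is at most $\ln N$, attained at the uniform distribution. So everything reduces to bounding the number of admissible values $y_j$, i.e. the size of the support $\pi_j\big(\pi_\mathcal{J}^{-1}(\bfs y_\mathcal{J})\big)$.

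First I would treat the generic case $\bfs y_\mathcal{J}\notin\pi_\mathcal{J}(W_{j,\mathcal{C}}(\F))$. Here the specialized polynomial $f_\mathcal{C}(\bfs y_\mathcal{J},X_j)\in\F[X_j]$ is nonzero by definition of $W_{j,\mathcal{C}}$; since $f_\mathcal{C}$ has degree $d_{j,\mathcal{C}}$ in $X_j$, the specialization has degree at most $d_{j,\mathcal{C}}$ and hence at most $d_{j,\mathcal{C}}$ roots in $\overline{\F}$. Every point $\bfs x\in\pi_\mathcal{J}^{-1}(\bfs y_\mathcal{J})$ satisfies $f_\mathcal{C}((x_i)_{i\in\mathcal{C}})=0$ because $f_\mathcal{C}\in I_\G$, so its $j$th coordinate $x_j$ is a root of $f_\mathcal{C}(\bfs y_\mathcal{J},X_j)$; thus the support has at most $d_{j,\mathcal{C}}$ elements and $\Sigma(\bfs y_\mathcal{J})\le\ln d_{j,\mathcal{C}}$.

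For the exceptional case $\bfs y_\mathcal{J}\in\pi_\mathcal{J}(W_{j,\mathcal{C}}(\F))$, the specialized polynomial vanishes identically and gives no information, so I fall back on the trivial bound: the coordinate $y_j$ of any $\F$-rational point lies in $\F$, which has $q$ elements, hence the support of the conditional law has cardinality at most $q$ and $\Sigma(\bfs y_\mathcal{J})\le\ln q$. Combining the two cases yields the piecewise bound in the statement. I do not anticipate a genuine obstacle here — the only point requiring a little care is the identification of $\Sigma(\bfs y_\mathcal{J})$ as a bona fide entropy (checking that $\pi_\mathcal{J}^{-1}(\bfs y_\mathcal{J})$ is finite in the generic case, which follows from the root count above, and that one may harmlessly restrict the outer sum over $y_j\in\pi_j(V(\F))$ to those $y_j$ with nonempty fiber, since empty-fiber terms contribute $0$), after which the result is immediate from $H\le\ln(\text{support size})$.
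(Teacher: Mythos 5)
Your proof is correct and takes essentially the same route as the paper: in both, the inner sum is recognized (implicitly in the paper, explicitly by you) as the Shannon entropy of the conditional law of $y_j$ given $\bfs y_\mathcal{J}$, and is then bounded by the logarithm of the support size, using the root count of the specialized polynomial $f_\mathcal{C}(\bfs y_\mathcal{J},X_j)$ in the generic case. The one place where your argument is actually cleaner than the paper's is the exceptional case $\bfs y_\mathcal{J}\in\pi_\mathcal{J}(W_{j,\mathcal{C}}(\F))$: the paper asserts that the fibers $\pi_\mathcal{C}^{-1}(\bfs y_\mathcal{J},y_j)$, $y_j\in\F$, all have the same cardinality, and on that basis claims the inner sum equals $\ln q$ exactly; this equicardinality claim is not justified and need not hold in general. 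Your version sidesteps it entirely by invoking only the upper bound $H\le\ln(\textrm{support size})\le\ln q$, which is all the lemma requires. So the proposal is not only correct but avoids an unnecessary (and arguably erroneous) overstatement in the original proof.
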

\begin{proof}
If $\bfs y_\mathcal{J}\in \pi_\mathcal{J}(W_{j,\mathcal{C}}(\F))$,
then the polynomial $f_\mathcal{C}(\bfs y_\mathcal{J},X_j)$ vanishes
identically and thus the fiber $\pi_\mathcal{J}^{-1}(\bfs
y_\mathcal{J})$ decomposes as the disjoint union
$$\pi_\mathcal{J}^{-1}(\bfs y_\mathcal{J})=\bigcup_{y_j\in\F}\pi_\mathcal{C}^{-1}(\bfs y_\mathcal{J},y_j),$$
where all the sets in the right-hand side have the same cardinality.
As a consequence,
$$|\pi_\mathcal{J}^{-1}(\bfs y_\mathcal{J})|=\sum_{y_j\in\F}|\pi_\mathcal{C}^{-1}(\bfs y_\mathcal{J},y_j)|
=q\,|\pi_\mathcal{C}^{-1}(\bfs y_\mathcal{J},y_j)|,$$
for any $y_j\in\F$, which readily implies
$$\sum_{y_j\in\pi_j(V(\F))}
\frac{|\pi_\mathcal{C}^{-1}(\bfs
y_\mathcal{J},y_j)|}{|\pi_\mathcal{J}^{-1}(\bfs y_\mathcal{J})|} \ln
\frac{|\pi_\mathcal{J}^{-1}(\bfs
y_\mathcal{J})|}{|\pi_\mathcal{C}^{-1}(\bfs y_\mathcal{J},y_j)|}=
\sum_{y_j\in\pi_j(V(\F))} \frac{|\pi_\mathcal{C}^{-1}(\bfs
y_\mathcal{J},y_j)|}{|\pi_\mathcal{J}^{-1}(\bfs y_\mathcal{J})|} \ln
q=\ln q.
$$

On the other hand, if $\bfs y_\mathcal{J}\notin
\pi_\mathcal{J}(W_{j,\mathcal{C}}(\F))$, then $f_\mathcal{C}(\bfs
y_\mathcal{J},X_j)$ has at most $d_{j,\mathcal{C}}$ roots in $\F$.
Hence the fiber $\pi_\mathcal{J}^{-1}(\bfs y_\mathcal{J})$
decomposes as the disjoint union
$$\pi_\mathcal{J}^{-1}(\bfs y_\mathcal{J})=\bigcup_{y_j\in\F:f_\mathcal{C}(\bfs y_\mathcal{J},y_j)=0}
\pi_\mathcal{C}^{-1}(\bfs y_\mathcal{J},y_j).$$
If $A_{\bfs y_\mathcal{J}}:=\{y_j\in\F:f_\mathcal{C}(\bfs y_\mathcal{J},y_j)=0\}$, then
$|A_{\bfs y_\mathcal{J}}|\le d_{j,\mathcal{C}}$, and it follows that
\begin{align*}
  \sum_{y_j\in\pi_j(V(\F))}
&\frac{|\pi_\mathcal{C}^{-1}(\bfs
y_\mathcal{J},y_j)|}{|\pi_\mathcal{J}^{-1}(\bfs y_\mathcal{J})|} \ln
\frac{|\pi_\mathcal{J}^{-1}(\bfs
y_\mathcal{J})|}{|\pi_\mathcal{C}^{-1}(\bfs y_\mathcal{J},y_j)|} =
  \sum_{y_j\in A_{\bfs y_\mathcal{J}}}
\frac{|\pi_\mathcal{C}^{-1}(\bfs y_\mathcal{J},y_j)|}{|\pi_\mathcal{J}^{-1}(\bfs y_\mathcal{J})|}
\ln
\frac{|\pi_\mathcal{J}^{-1}(\bfs y_\mathcal{J})|}{|\pi_\mathcal{C}^{-1}(\bfs y_\mathcal{J},y_j)|}.%\\
%&=
%  \sum_{y_j\in A_{\bfs y_\mathcal{J}}}
%\frac{|\pi_\mathcal{C}^{-1}(\bfs y_\mathcal{J},y_j)|}{|\pi_\mathcal{J}^{-1}(\bfs y_\mathcal{J})|}
%\ln
%\frac{\sum_{y_j\in A_{\bfs y_\mathcal{J}}}
%{|\pi_\mathcal{C}^{-1}(\bfs y_\mathcal{J},y_j)|}}{|\pi_\mathcal{C}^{-1}(\bfs y_\mathcal{J},y_j)|}.
\end{align*}
This sum is maximized when all the fibers $\pi_\mathcal{C}^{-1}(\bfs
y_\mathcal{J},y_j)$ have the same cardinality, in which case is upper
bounded by the logarithm of the number of
summands. As this number is at most $d_{j,\mathcal{C}}$, we easily
conclude that
$$  \sum_{y_j\in\pi_j(V(\F))}
\frac{|\pi_\mathcal{C}^{-1}(\bfs
y_\mathcal{J},y_j)|}{|\pi_\mathcal{J}^{-1}(\bfs y_\mathcal{J})|} \ln
\frac{|\pi_\mathcal{J}^{-1}(\bfs
y_\mathcal{J})|}{|\pi_\mathcal{C}^{-1} (\bfs y_\mathcal{J},y_j)|}
\le\ln d_{j,\mathcal{C}},$$
as claimed.
\end{proof}

Now we proceed to obtain an upper bound for $H(\mathbb{P}_j|\mathbb{P}_\mathcal{J})$.
\begin{proposition}\label{prop: bound condition entropy}
Suppose that $q:=|\F|>(5\delta^{7/3}+1)^2$. Let $\mathcal{C}$ be a
circuit of ${\sf M}:=(\mathcal{E},r)$. Let $j\in\mathcal{C}$ and
$\mathcal{J}:=\mathcal{C}\setminus\{j\}$. Then
$$H(\mathbb{P}_j|\mathbb{P}_\mathcal{J})\le  \frac{\delta^{2}}{q^{1/2}}\ln q+\ln\delta. $$
\end{proposition}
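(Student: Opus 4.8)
The plan is to combine the pointwise estimate of Lemma \ref{lemma: upper bound inner sum} with the bound on the number of $\F$-rational points lying over the "bad" locus $W_{j,\mathcal{C}}$, exactly as in the treatment of bases. Starting from the expression \eqref{eq: first expression conditional entropy}, I would split the outer sum over $\bfs y_\mathcal{J}\in\pi_\mathcal{J}(V(\F))$ into the part with $\bfs y_\mathcal{J}\in\pi_\mathcal{J}(W_{j,\mathcal{C}}(\F))$ and its complement. Applying Lemma \ref{lemma: upper bound inner sum} to each inner sum gives
\[
H(\mathbb{P}_j|\mathbb{P}_\mathcal{J})\le
\frac{|W_{j,\mathcal{C}}(\F)|}{|V(\F)|}\ln q
+\frac{|(V\setminus W_{j,\mathcal{C}})(\F)|}{|V(\F)|}\ln d_{j,\mathcal{C}}
\le \frac{|W_{j,\mathcal{C}}(\F)|}{|V(\F)|}\ln q+\ln d_{j,\mathcal{C}},
\]
where in the first step I have reorganized the weights $|\pi_\mathcal{J}^{-1}(\bfs y_\mathcal{J})|/|V(\F)|$ so that the $\bfs y_\mathcal{J}$-sums over each region telescope to $|W_{j,\mathcal{C}}(\F)|/|V(\F)|$ and $|(V\setminus W_{j,\mathcal{C}})(\F)|/|V(\F)|$ respectively, and in the second I have bounded the second fraction by $1$ and used $d_{j,\mathcal{C}}\le\delta$.

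**The main technical point** is the bound $|W_{j,\mathcal{C}}(\F)|\le \delta^2 q^{m-1}$. Writing $f_\mathcal{C}=\sum_{h\in\mathcal{H}}m_h X_j^h$ with $m_h\in\G[X_i:i\in\mathcal{J}]$ and taking $h_0:=\max\mathcal{H}$, the condition that $f_\mathcal{C}(\pi_\mathcal{J}(\bfs x),X_j)$ vanish identically forces in particular $m_{h_0}(\pi_\mathcal{J}(\bfs x))=0$, so $W_{j,\mathcal{C}}\subset V\cap\{m_{h_0}=0\}$. Since $\mathcal{J}=\mathcal{C}\setminus\{j\}$ is independent (being a proper subset of the circuit $\mathcal{C}$), we have $I_\G\cap\G[X_i:i\in\mathcal{J}]=\{0\}$, hence $m_{h_0}$ is neither zero nor a zero divisor in $\G[X_1,\ldots,X_n]/I_\G$, so $V\cap\{m_{h_0}=0\}$ has pure codimension $1$ in $V$. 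Combining \eqref{eq: upper bound -- affine gral} and the B\'ezout inequality \eqref{eq: Bezout},
\[
|W_{j,\mathcal{C}}(\F)|\le |V\cap\{m_{h_0}=0\}\cap\F^n|\le \deg(V\cap\{m_{h_0}=0\})\,q^{m-1}\le \delta^2 q^{m-1}.
\]
This step mirrors the argument already carried out for bases in the preceding subsection, so it is routine here.

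**To finish,** I divide the numerator and denominator of $|W_{j,\mathcal{C}}(\F)|/|V(\F)|$ appropriately: by Lemma \ref{lemma: explicit Lang-Weil} we have $|V(\F)|>q^{m-1/2}$, hence
\[
\frac{|W_{j,\mathcal{C}}(\F)|}{|V(\F)|}\le \frac{\delta^2 q^{m-1}}{q^{m-1/2}}=\frac{\delta^2}{q^{1/2}}.
\]
Substituting into the displayed inequality above yields
\[
H(\mathbb{P}_j|\mathbb{P}_\mathcal{J})\le \frac{\delta^2}{q^{1/2}}\ln q+\ln d_{j,\mathcal{C}}\le \frac{\delta^2}{q^{1/2}}\ln q+\ln\delta,
\]
which is the claimed bound. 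The only subtlety worth spelling out is the reorganization of weights in the first display: one must check that for $\bfs y_\mathcal{J}\in\pi_\mathcal{J}(W_{j,\mathcal{C}}(\F))$ the whole fiber $\pi_\mathcal{J}^{-1}(\bfs y_\mathcal{J})$ lies over $W_{j,\mathcal{C}}(\F)$ (which is immediate since $W_{j,\mathcal{C}}$ is defined by a condition depending only on $\pi_\mathcal{J}(\bfs x)$), so that $\sum_{\bfs y_\mathcal{J}\in\pi_\mathcal{J}(W_{j,\mathcal{C}}(\F))}|\pi_\mathcal{J}^{-1}(\bfs y_\mathcal{J})|=|W_{j,\mathcal{C}}(\F)|$; the complementary sum is handled identically. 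No genuine obstacle arises beyond this bookkeeping.
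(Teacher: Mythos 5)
Your proof is correct and follows essentially the same route as the paper: split the outer sum of \eqref{eq: first expression conditional entropy} over $\pi_\mathcal{J}(W_{j,\mathcal{C}}(\F))$ and its complement, apply Lemma \ref{lemma: upper bound inner sum}, bound $|W_{j,\mathcal{C}}(\F)|$ by $\delta^2 q^{m-1}$ via the leading coefficient $m_{h_0}$ and B\'ezout, and finish with $|V(\F)|>q^{m-1/2}$ from Lemma \ref{lemma: explicit Lang-Weil}. The only cosmetic difference is that the paper first writes the ratio as $\delta^2/(q-5\delta^{7/3}q^{1/2})$ before relaxing it to $\delta^2/q^{1/2}$, whereas you invoke the $q^{m-1/2}$ lower bound directly; your explicit remark that $W_{j,\mathcal{C}}$ is saturated for $\pi_\mathcal{J}$ (so the fiber counts telescope to $|W_{j,\mathcal{C}}(\F)|$) is a point the paper leaves implicit.
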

\begin{proof}
From \eqref{eq: first expression conditional entropy} we have
\begin{align*}
H(\mathbb{P}_j|\mathbb{P}_\mathcal{J})   &= \sum_{\bfs
y_\mathcal{J}\in\pi_\mathcal{J}(V(\F))}
\frac{|\pi_\mathcal{J}^{-1}(\bfs y_\mathcal{J})|}{|V(\F)|}
\sum_{y_j\in\pi_j(V(\F))} \frac{|\pi_\mathcal{C}^{-1}(\bfs
y_\mathcal{J},y_j)|}{|\pi_\mathcal{J}^{-1}(\bfs y_\mathcal{J})|} \ln
\frac{|\pi_\mathcal{J}^{-1}(\bfs
y_\mathcal{J})|}{|\pi_\mathcal{C}^{-1}(\bfs y_\mathcal{J},y_j)|}.
\end{align*}
Since
$\pi_\mathcal{J}(V(\F))=\pi_\mathcal{J}\big(W_{j,\mathcal{C}}(\F)\big)\,\cup\,
\pi_\mathcal{J}\big((V\setminus W_{j,\mathcal{C}})(\F)\big)$, we can
decompose the sum into two parts. Applying Lemma \ref{lemma: upper
bound inner sum}, we obtain
\begin{align*}
H(\mathbb{P}_j|\mathbb{P}_\mathcal{J})&\le
\sum_{\bfs y_\mathcal{J}\in \pi_\mathcal{J}(W_{j,\mathcal{C}}(\F))}
\frac{|\pi_\mathcal{J}^{-1}(\bfs y_\mathcal{J})|}{|V(\F)|}\ln q +
\sum_{\bfs y_\mathcal{J}\in\pi_\mathcal{J}((V\setminus W_{j,\mathcal{C}})(\F))}
\frac{|\pi_\mathcal{J}^{-1}(\bfs y_\mathcal{J})|}{|V(\F)|}\ln d_{j,\mathcal{C}}\\
&\le \frac{|W_{j,\mathcal{C}}(\F)|}{|V(\F)|}\ln q + \ln
d_{j,\mathcal{C}}
\end{align*}

Next we bound $|W_{j,\mathcal{C}}(\F)|$. Recall that
$W_{j,\mathcal{C}}$ is the set of $\bfs x\in V$ for which
$f_\mathcal{C}(\pi_\mathcal{J}(\bfs x),X_j)=0$. The polynomial
$f_\mathcal{C}\in \G[X_i:i\in \mathcal{C}]$ is absolutely
irreducible and lies in $\G[(X_i:i\in
\mathcal{J}][X_j]\setminus\G[X_i:i\in \mathcal{J}]$.
%In particular, it is primitive as a polynomial of $\overline{\F}[(X_i)_{i\in \mathcal{J}}][X_j]$.
Writing
$$f_{\mathcal{C}}=\sum_{h\in\mathcal{H}}m_hX_j^h,$$
with $m_h\in\G[(X_i)_{i\in \mathcal{J}}]$ nonzero for all $h$, we
note that the algebraic independence of $\{f_i:i\in \mathcal{J}\}$
implies $m_h((f_i)_{i\in \mathcal{J}})\not=0$. Hence $m_h\notin
I_\G$, and since $V$ is absolutely irreducible, $V\cap \{m_h=0\}$
has pure codimension one in $V$ or is empty. Therefore
$$W_{j,\mathcal{C}}\subset V\cap \{m_h=0\}.$$
Applying \eqref{eq: upper bound -- affine gral}, together with
\eqref{eq: Bezout} and Proposition \ref{prop: degree bound f_C}, we
obtain
\begin{align*}
|W_{j,\mathcal{C}}(\F)|\le
|(V\cap \{m_h=0\})(\F)|
&\le \deg (V\cap \{m_h=0\})
q^{\dim V-1}\\
&\le \deg (V)\deg(m_h)\,q^{\dim V-1}\le \delta^{2}\,q^{m-1}.
\end{align*}

Finally, by Lemma \ref{lemma: explicit Lang-Weil},
$$
|V(\F)|\ge
q^m-5\delta^{7/3}q^{m-1/2}.
$$
The assumption on $q$ implies that the right-hand side is greater
than $q^{m-1/2}$ (see Lemma \ref{lemma: explicit Lang-Weil}). We
thus conclude
\begin{align*}
H(\mathbb{P}_j|\mathbb{P}_\mathcal{J})&\le \frac{|
W_{j,\mathcal{C}}(\F)|}{|V(\F)|}\ln q + \ln d_{j,\mathcal{C}}\\&\le
\frac{\delta^{2}}{q-5\delta^{7/3}q^{1/2}}\ln q+\ln\delta \le
\frac{\delta^{2}}{q^{1/2}}\ln q+\ln \delta,
\end{align*}
as claimed.
\end{proof}

Now we are able to obtain an upper bound on the difference
$h_\F(\mathcal{C})-r(\mathcal{C})$.
\begin{corollary}\label{coro: bound diff entropy}
With the assumptions of Proposition \ref{prop: bound condition
entropy}, and with $h_\F:2^\mathcal{E}\to\mathbb{R}$ defined as in
\eqref{eq: definition polymatroid}, for any circuit $\mathcal{C}$ we
have
$$h_\F(\mathcal{C})-r(\mathcal{C})\le \frac{\delta^2}{q^{1/2}}+\frac{\ln\delta}{\ln q}.$$
\end{corollary}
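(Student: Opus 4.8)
**

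The plan is to derive Corollary \ref{coro: bound diff entropy} directly from Proposition \ref{prop: bound condition entropy} by relating the normalized entropy $h_\F(\mathcal{C})$ to the conditional entropy $H(\mathbb{P}_j\mid\mathbb{P}_\mathcal{J})$ and using that the sub-circuit $\mathcal{J}:=\mathcal{C}\setminus\{j\}$ is independent, so the results of the previous subsection apply to it. Concretely, fix $j\in\mathcal{C}$ and set $\mathcal{J}:=\mathcal{C}\setminus\{j\}$. Since $\mathcal{C}$ is a circuit, $\mathcal{J}$ is independent, hence $r(\mathcal{C})=r(\mathcal{J})=|\mathcal{J}|$. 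Using the chain rule for entropy,
\[
H(\mathbb{P}_\mathcal{C})=H(\mathbb{P}_\mathcal{J})+H(\mathbb{P}_j\mid\mathbb{P}_\mathcal{J}),
\]
so that dividing by $\ln q$ gives
\[
h_\F(\mathcal{C})=h_\F(\mathcal{J})+\frac{H(\mathbb{P}_j\mid\mathbb{P}_\mathcal{J})}{\ln q}.
\]

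The first step is to bound $h_\F(\mathcal{J})-r(\mathcal{J})$. Since $\mathcal{J}$ is an independent set, the Corollary following Theorem \ref{th: estimate for base} applies; however, for the one-sided bound we need here it is simpler to note that the argument in the independent-set analysis already shows $h_\F(\mathcal{J})-r(\mathcal{J})\le 0$: the entropy $H(\mathbb{P}_\mathcal{J})$ is at most $\ln|\pi_\mathcal{J}(V(\F))|\le\ln q^{|\mathcal{J}|}=|\mathcal{J}|\ln q$, whence $h_\F(\mathcal{J})\le|\mathcal{J}|=r(\mathcal{J})$. (Here we use \eqref{eq: upper bound -- affine gral} applied to the image, or simply that $\pi_\mathcal{J}(V(\F))\subset\F^\mathcal{J}$.) The second step is to invoke Proposition \ref{prop: bound condition entropy}, which gives
\[
\frac{H(\mathbb{P}_j\mid\mathbb{P}_\mathcal{J})}{\ln q}\le\frac{\delta^2}{q^{1/2}}+\frac{\ln\delta}{\ln q}.
\]
Combining the two steps, $h_\F(\mathcal{C})-r(\mathcal{C})=\bigl(h_\F(\mathcal{J})-r(\mathcal{J})\bigr)+\frac{H(\mathbb{P}_j\mid\mathbb{P}_\mathcal{J})}{\ln q}\le 0+\frac{\delta^2}{q^{1/2}}+\frac{\ln\delta}{\ln q}$, which is exactly the claimed inequality.

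There is essentially no hard step: the content has already been concentrated into Proposition \ref{prop: bound condition entropy}, and everything else is the chain rule together with the elementary bound $h_\F(\mathcal{J})\le r(\mathcal{J})$ for independent $\mathcal{J}$. The only point requiring a modicum of care is the verification that $\mathcal{J}$ is independent with $r(\mathcal{J})=r(\mathcal{C})$ — this is immediate from the definition of a circuit as a minimally dependent set — and the observation that the hypothesis $q>(5\delta^{7/3}+1)^2$ inherited from Proposition \ref{prop: bound condition entropy} is what we are assuming, so no extra conditions on $q$ appear. I would write the proof in three or four lines: state $\mathcal{J}:=\mathcal{C}\setminus\{j\}$, record $r(\mathcal{C})=|\mathcal{J}|$ and $h_\F(\mathcal{J})\le|\mathcal{J}|$, apply the chain rule, and finish with Proposition \ref{prop: bound condition entropy}.
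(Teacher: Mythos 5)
Your proof is correct and follows essentially the same route as the paper: chain rule for entropy, Proposition~\ref{prop: bound condition entropy} for the conditional term, and the elementary bound $h_\F(\mathcal{J})\le |\mathcal{J}|=r(\mathcal{C})$ from the fact that $\mathbb{P}_\mathcal{J}$ is supported on a subset of $\F^{\mathcal{J}}$. The only difference is cosmetic: you isolate $h_\F(\mathcal{J})-r(\mathcal{J})\le 0$ as a separate step, while the paper folds it into the final chain of inequalities.
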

\begin{proof}
Let $\mathcal{J}:=\mathcal{C}\setminus\{j\}$ for a given
$j\in\mathcal{C}$. By definition,
$h_\F(\mathcal{A}):=\frac{H(\mathbb{P}_\mathcal{A})}{\ln|\F|}$ for
any $\mathcal{A}\subset\mathcal{E}$. For a circuit $\mathcal{C}$ and
$\mathcal{J}$ as above, we have
$$
H(\mathbb{P}_\mathcal{C})-H(\mathbb{P}_\mathcal{J})=H(\mathbb{P}_j|\mathbb{P}_\mathcal{J})
\le \frac{\delta^2}{q^{1/2}}\ln q+\ln\delta,
$$
by Proposition \ref{prop: bound condition entropy}. Dividing by $\ln
q$ yields
\begin{equation}\label{eq: bound diff entropies}
h_\F(\mathcal{C})-h_\F(\mathcal{J})
\le \frac{\delta^2}{q^{1/2}}+\frac{\ln\delta}{\ln q}.
\end{equation}
Finally,
$$h_\F(\mathcal{C})-\bigg(\frac{\delta^2}{q^{1/2}}
+\frac{\ln\delta}{\ln q}\bigg)\le
h_\F(\mathcal{J})=\frac{H(\mathbb{P}_\mathcal{J})}{\ln q}\le
\frac{\ln|\F^\mathcal{J}|}{\ln q}=|\mathcal{J}|=r(\mathcal{C}),$$
which proves the claim.
\end{proof}

Next we obtain a corresponding bound for an arbitrary dependent set.
\begin{lemma}\label{lemma: bound entropy arbitrary K}
With the assumptions of Proposition \ref{prop: bound condition
entropy}, for any dependent set $\mathcal{K}$ of $\mathcal{E}$ we
have
$$h_\F(\mathcal{K})-r(\mathcal{K})\le |\mathcal{E}|
\bigg(\frac{\delta^2}{q^{1/2}}+\frac{\ln\delta}{\ln q}\bigg).$$
\end{lemma}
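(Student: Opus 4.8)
The plan is to reduce the general dependent set $\mathcal{K}$ to the circuit case already treated in Proposition~\ref{prop: bound condition entropy}, by adjoining the \emph{redundant} coordinates of $\mathcal{K}$ one at a time and charging each insertion a conditional-entropy cost of at most $\frac{\delta^2}{q^{1/2}}\ln q+\ln\delta$. First I would fix a maximal independent subset $\mathcal{I}\subseteq\mathcal{K}$, so that $r(\mathcal{K})=|\mathcal{I}|=r(\mathcal{I})$, enumerate $\mathcal{K}\setminus\mathcal{I}=\{j_1,\dots,j_t\}$ with $t=|\mathcal{K}\setminus\mathcal{I}|\le|\mathcal{E}|$, and set $\mathcal{I}_0:=\mathcal{I}$ and $\mathcal{I}_\ell:=\mathcal{I}\cup\{j_1,\dots,j_\ell\}$, so that $\mathcal{I}_t=\mathcal{K}$. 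The chain rule for entropy then gives
\[
H(\mathbb{P}_\mathcal{K})=H(\mathbb{P}_\mathcal{I})+\sum_{\ell=1}^{t}H(\mathbb{P}_{j_\ell}\mid\mathbb{P}_{\mathcal{I}_{\ell-1}}),
\]
and since the support of $\mathbb{P}_\mathcal{I}$ is contained in $\F^\mathcal{I}$ one has the elementary bound $H(\mathbb{P}_\mathcal{I})\le|\mathcal{I}|\ln|\F|=r(\mathcal{K})\ln q$. Dividing by $\ln q$, it would remain only to bound each conditional term $H(\mathbb{P}_{j_\ell}\mid\mathbb{P}_{\mathcal{I}_{\ell-1}})$ by $\frac{\delta^2}{q^{1/2}}\ln q+\ln\delta$; summing the $t\le|\mathcal{E}|$ contributions then yields the claimed inequality.

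To bound the $\ell$-th conditional term I would first produce the relevant circuit. Since $\mathcal{I}$ is a maximal independent subset of $\mathcal{K}$, we have $r(\mathcal{I}\cup\{j_\ell\})=r(\mathcal{I})$, so $\mathcal{I}\cup\{j_\ell\}$ is dependent; as $\mathcal{I}$ is independent, it contains a circuit $\mathcal{C}^{(\ell)}$ of ${\sf M}$ with $j_\ell\in\mathcal{C}^{(\ell)}\subseteq\mathcal{I}\cup\{j_\ell\}$ (concretely, the fundamental circuit of $j_\ell$ with respect to $\mathcal{I}$ in the restriction of ${\sf M}$ to $\mathcal{K}$). Writing $\mathcal{J}^{(\ell)}:=\mathcal{C}^{(\ell)}\setminus\{j_\ell\}\subseteq\mathcal{I}\subseteq\mathcal{I}_{\ell-1}$, Proposition~\ref{prop: bound condition entropy} applied to the circuit $\mathcal{C}^{(\ell)}$ and the element $j_\ell$ (the standing hypothesis $q>(5\delta^{7/3}+1)^2$ being in force) gives $H(\mathbb{P}_{j_\ell}\mid\mathbb{P}_{\mathcal{J}^{(\ell)}})\le\frac{\delta^{2}}{q^{1/2}}\ln q+\ln\delta$. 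Then I would invoke the fact that conditioning on more coordinates cannot increase entropy: applying submodularity of $H$ to the pair $\mathcal{I}_{\ell-1}$ and $\mathcal{J}^{(\ell)}\cup\{j_\ell\}$, whose union is $\mathcal{I}_{\ell-1}\cup\{j_\ell\}$ and whose intersection is $\mathcal{J}^{(\ell)}$ (as $j_\ell\notin\mathcal{I}_{\ell-1}$), one gets $H(\mathbb{P}_{j_\ell}\mid\mathbb{P}_{\mathcal{I}_{\ell-1}})\le H(\mathbb{P}_{j_\ell}\mid\mathbb{P}_{\mathcal{J}^{(\ell)}})$, which is exactly what is needed.

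I expect no serious obstacle here: the argument is essentially bookkeeping around Proposition~\ref{prop: bound condition entropy} and introduces no new geometry. The one point that requires care, and which I would describe as the ``hard part'', is the passage from conditioning on the small witnessing set $\mathcal{J}^{(\ell)}$ (a circuit minus $j_\ell$, on which the sharp circuit estimate is available) to conditioning on the full set $\mathcal{I}_{\ell-1}$ of previously adjoined coordinates: one must correctly identify $\mathcal{J}^{(\ell)}$ inside $\mathcal{I}$ for each new element and then apply submodularity in the right direction. Everything else reduces to the chain rule, the max-entropy estimate $H(\mathbb{P}_\mathcal{I})\le|\mathcal{I}|\ln q$, and the trivial bound $t\le|\mathcal{E}|$.
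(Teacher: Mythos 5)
Your proof is correct and follows essentially the same route as the paper's: telescope over the redundant elements $\mathcal{K}\setminus\mathcal{I}$, use submodularity to shrink the conditioning set down to the punctured fundamental circuit $\mathcal{J}^{(\ell)}=\gamma(j_\ell,\mathcal{I})\setminus\{j_\ell\}$, invoke Proposition~\ref{prop: bound condition entropy}, and close by bounding $h_\F(\mathcal{I})\le|\mathcal{I}|=r(\mathcal{K})$. The only cosmetic difference is that the paper compresses the submodularity step into inequalities on $h_\F$ of the form $h_\F(\mathcal{J}\cup\{k\})-h_\F(\mathcal{J})\le h_\F(\gamma(k,\mathcal{J}))-h_\F(\gamma(k,\mathcal{J})\setminus\{k\})$ rather than writing out the conditional-entropy version, and it appeals to the already-packaged circuit estimate \eqref{eq: bound diff entropies} rather than re-deriving it from the Proposition.
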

\begin{proof}
Let $\mathcal{K}$ be a dependent subset of $\mathcal{E}$, and let
$\mathcal{J}$ be a maximal independent set contained in
$\mathcal{K}$. For $k\in \mathcal{K}\setminus \mathcal{J}$, let
$\gamma(k, \mathcal{J})$ be the unique circuit contained in $k\cup
\mathcal{J}$. By the submodularity of the entropy,
$$h_\F(\gamma(k, \mathcal{J}))-h_\F(\gamma(k, \mathcal{J})\setminus \{k\})
\ge h_\F(\mathcal{J}\cup \{k\})-h_\F(\mathcal{J}).$$
Setting $\{k_1, \dots, k_t\}:=\mathcal{K} \setminus \mathcal{J} $
with $t \ge 1$, and considering the chain
\[
\mathcal{J} \subset \mathcal{J} \cup \{k_1\} \subset \mathcal{J}
\cup \{k_1,k_2\} \subset \dots \subset \mathcal{J} \cup \{k_1,\dots,k_t\} = \mathcal{K},
\]
by the submodularity of entropy we deduce
\begin{align*}
h_\F(\mathcal{K}) - h_\F(\mathcal{J})
&=\sum_{i=1}^t\big(h_\F(\mathcal{K}\setminus\{k_1,\ldots,k_{i-1}\}) - h_\F(\mathcal{K}\setminus\{k_1,\ldots,k_i\})
\big)\\
&\le \sum_{i=1}^t\big(h_\F(\mathcal{J}\cup \{k_i\}) - h_\F(\mathcal{J})\big)\\
&\le \sum_{i=1}^t\big(h_\F(\gamma(k_i,\mathcal{J})) -
h_\F(\gamma(k_i,\mathcal{J})\setminus\{k_i\})\big).
\end{align*}
Applying \eqref{eq: bound diff entropies} to each term yields
$$h_\F(\mathcal{K}) - h_\F(\mathcal{J})
\le t\bigg(\frac{\delta^2}{q^{1/2}}+\frac{\ln\delta}{\ln q}\bigg)
\le |\mathcal{E}|\bigg(\frac{\delta^2}{q^{1/2}}+\frac{\ln\delta}{\ln
q}\bigg).$$
Thus
\begin{equation}\label{eq: bound entropy arbitrary K}
h_\F(\mathcal{K}) - |\mathcal{E}|\bigg(\frac{\delta^2}{q^{1/2}}+\frac{\ln\delta}{\ln q}\bigg)
\le h_\F(\mathcal{J}) \le |\mathcal{J}| =r(\mathcal{K}).
\end{equation}
This implies the lemma.
\end{proof}

Now we derive a corresponding lower bound for
$h_\F(\mathcal{K})-r(\mathcal{K})$. Observe that
$h_\F(\mathcal{E})=\frac{\ln |V(\F)|}{\ln q}$. By Lemma \ref{lemma:
explicit Lang-Weil},
$$
|V(\F)| \ge
q^m-5\delta^{7/3}q^{m-1/2}=q^m\bigg(1-\frac{5\delta^{7/3}}{q^{1/2}}\bigg),$$
so that
$$h_\F(\mathcal{E})= \frac{\ln |V(\F)|}{\ln q}\ge m +
\frac{\ln(1-{5\delta^{7/3}}{q^{-1/2}})}{\ln q}.$$
Applying \eqref{eq: bound entropy arbitrary K} with $\mathcal{K} =
\mathcal{E}$ and $\mathcal{I}$ a maximal independent subset, for any
$\mathcal{J}\subset \mathcal{I}$, we have $h_\F(\mathcal{I}) \le
h_\F(\mathcal{J}) + h_\F(\mathcal{I} \setminus \mathcal{J})$, and
therefore
\begin{align*}
m+
\frac{\ln(1-{5\delta^{7/3}}{q^{-1/2}})}{\ln q}
\le h_\F(\mathcal{E}) &\le h_\F(\mathcal{I}) + |\mathcal{E}|\bigg(\frac{\delta^2}{q^{1/2}}
+\frac{\ln\delta}{\ln q}\bigg)
\\&\le h_\F(\mathcal{J}) + |\mathcal{I} \setminus \mathcal{J}| +
|\mathcal{E}|\bigg(\frac{\delta^2}{q^{1/2}}
+\frac{\ln\delta}{\ln q}\bigg),
\end{align*}
since $\frac{H(\mathbb{P}_{\mathcal{I}\setminus \mathcal{J}})}{\ln
q} \le \frac{\ln |\F^{\mathcal{I}\setminus \mathcal{J}}|}{\ln q}=
|\mathcal{I}\setminus \mathcal{J}|$. Hence
\begin{equation}\label{eq: auxiliar lower bound approx}
m - |\mathcal{I} \setminus \mathcal{J}| +
\frac{\ln(1-{5\delta^{7/3}}{q^{-1/2}})}{\ln q}- |\mathcal{E}|
\bigg(\frac{\delta^2}{q^{1/2}}
+\frac{\ln\delta}{\ln q}\bigg)
\le  h_\F(\mathcal{J}).
\end{equation}

Finally, let $\mathcal{K}$ be any dependent set, let $\mathcal{J}$
be a maximal independent subset of $\mathcal{K}$, and let
$\mathcal{I}$ be a maximal independent subset of $\mathcal{E}$
containing $\mathcal{J}$. Since $r(\mathcal{E}) - |\mathcal{I}
\setminus \mathcal{J}|=r(\mathcal{K})$, from \eqref{eq: auxiliar
lower bound approx} we deduce
$$r(\mathcal{K}) +
\frac{\ln(1-{5\delta^{7/3}}{q^{-1/2}})}{\ln q}- |\mathcal{E}|\bigg(\frac{\delta^2}{q^{1/2}}
+\frac{\ln\delta}{\ln q}\bigg)\le h_\F(\mathcal{J}) \le h_\F(\mathcal{K}),$$
which gives the desired lower bound.

Combining Lemma \ref{lemma: bound entropy arbitrary K} with the
lower bound derived from Lemma \ref{lemma: explicit Lang-Weil},
we obtain the following quantitative refinement of Mat\'u$\check{\textrm{s}}$'s
approximation scheme.
\begin{theorem}
Let ${\sf M}:=(\mathcal{E},r)$ be an algebraic matroid defined over
a finite field $\G$, let $V\subset\A^n_{\overline{\G}}$ be the variety
associated to ${\sf M}$, and let $\F$ be a finite field extension of $\G$
as in Lemma \ref{lemma: explicit Lang-Weil}. For an arbitrary subset
$\mathcal{K}$ of $\mathcal{E}$, with $n:=|\mathcal{E}|$,
$\delta:=\deg(V)$ and $q:=|\F|$, we have
$$|h_\F(\mathcal{K}) - r(\mathcal{K})|\le
\frac{n\delta^2}{q^{1/2}}+\frac{n\ln\delta}{\ln q}
-\frac{\ln(1-{5\delta^{7/3}}{q^{-1/2}})}{\ln q}.
$$
\end{theorem}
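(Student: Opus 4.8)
The plan is to obtain two‑sided control of the deviation $h_\F(\mathcal{K})-r(\mathcal{K})$ for an arbitrary $\mathcal{K}\subseteq\mathcal{E}$ by assembling the estimates already established, treating the upper and the lower deviation separately. I would start from the elementary observation that the standing hypothesis $q=|\F|>(5\delta^{7/3}+1)^2$ of Lemma~\ref{lemma: explicit Lang-Weil} forces $0<1-5\delta^{7/3}q^{-1/2}<1$, so that the quantity $-\ln(1-5\delta^{7/3}q^{-1/2})/\ln q$ appearing on the right‑hand side of the claimed inequality is strictly positive; in particular, adding it to any valid upper bound preserves that bound, and this is what will let us subsume the two cases below under a single expression.

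For the upper deviation I would distinguish whether $\mathcal{K}$ is independent. If $\mathcal{K}$ is independent, then $\mathbb{P}_\mathcal{K}$ is supported on a subset of $\F^{\mathcal{K}}$, whence $h_\F(\mathcal{K})=H(\mathbb{P}_\mathcal{K})/\ln q\le|\mathcal{K}|=r(\mathcal{K})$, so $h_\F(\mathcal{K})-r(\mathcal{K})\le 0$. If $\mathcal{K}$ is dependent, Lemma~\ref{lemma: bound entropy arbitrary K} applies verbatim and gives $h_\F(\mathcal{K})-r(\mathcal{K})\le |\mathcal{E}|\big(\delta^2 q^{-1/2}+\ln\delta/\ln q\big)$. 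In either case the upper deviation is at most $n\delta^2 q^{-1/2}+n\ln\delta/\ln q$, and hence, by the sign observation, at most the right‑hand side of the theorem.

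For the lower deviation I would reuse inequality~\eqref{eq: auxiliar lower bound approx}, which was derived for an arbitrary subset $\mathcal{J}$ of a base $\mathcal{I}$ of ${\sf M}$ and states that $m-|\mathcal{I}\setminus\mathcal{J}|+\ln(1-5\delta^{7/3}q^{-1/2})/\ln q-|\mathcal{E}|\big(\delta^2 q^{-1/2}+\ln\delta/\ln q\big)\le h_\F(\mathcal{J})$. Given an arbitrary $\mathcal{K}$, I would choose $\mathcal{J}$ to be a maximal independent subset of $\mathcal{K}$ and $\mathcal{I}$ a base of ${\sf M}$ containing $\mathcal{J}$; then $m-|\mathcal{I}\setminus\mathcal{J}|=|\mathcal{J}|=r(\mathcal{J})=r(\mathcal{K})$, while monotonicity of the polymatroid $h_\F$ gives $h_\F(\mathcal{K})\ge h_\F(\mathcal{J})$. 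Substituting yields $r(\mathcal{K})-h_\F(\mathcal{K})\le -\ln(1-5\delta^{7/3}q^{-1/2})/\ln q+n\delta^2 q^{-1/2}+n\ln\delta/\ln q$. Combined with the upper deviation bound of the previous paragraph, this is precisely the asserted estimate for $|h_\F(\mathcal{K})-r(\mathcal{K})|$.

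Since the genuinely substantive work — the conditional‑entropy estimate of Proposition~\ref{prop: bound condition entropy} and the effective Lang--Weil lower bound of Lemma~\ref{lemma: explicit Lang-Weil} — is already in place, the only delicate point is the bookkeeping of this case split. One must check that \eqref{eq: auxiliar lower bound approx} is genuinely applicable to \emph{every} $\mathcal{K}\subseteq\mathcal{E}$, including $\mathcal{K}=\mathcal{E}$ and all independent subsets, which is exactly why the correct move is to pass to a maximal independent subset $\mathcal{J}\subseteq\mathcal{K}$ rather than to assume $\mathcal{K}$ dependent; and one must keep track of the sign of $-\ln(1-5\delta^{7/3}q^{-1/2})$, which is what permits the upper‑deviation bound — obtained without any such term — to be absorbed into the single inequality stated in the theorem.
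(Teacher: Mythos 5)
Your proposal is correct and follows the same route as the paper: upper deviation via Lemma~\ref{lemma: bound entropy arbitrary K}, lower deviation via inequality~\eqref{eq: auxiliar lower bound approx} applied to a maximal independent subset $\mathcal{J}\subseteq\mathcal{K}$ and a base $\mathcal{I}\supseteq\mathcal{J}$, then combining. The one place where you are slightly more careful than the text is the explicit handling of independent $\mathcal{K}$: the paper states Lemma~\ref{lemma: bound entropy arbitrary K} only for dependent sets and says ``let $\mathcal{K}$ be any dependent set'' in deriving the lower bound, leaving the (easy, but not stated) observation that independent sets satisfy $h_\F(\mathcal{K})\le r(\mathcal{K})$ and that the lower-bound argument applies unchanged with $\mathcal{J}=\mathcal{K}$. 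Your remark that the sign of $-\ln(1-5\delta^{7/3}q^{-1/2})/\ln q$ is what allows the two cases to be subsumed under a single displayed bound is the right bookkeeping point, and your explicit use of monotonicity of $h_\F$ to pass from $h_\F(\mathcal{J})$ to $h_\F(\mathcal{K})$ matches the paper's implicit step.
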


This theorem shows that the qualitative scheme of
\cite{Matus24}---first controlling circuits and then extending to
arbitrary dependent sets--—admits an explicit quantitative version.
Our contribution lies in providing effective constants depending
only on $n$, $\delta$, and $q$, thereby turning the almost entropic
approximation of algebraic matroids into a concrete, computable
bound.

%\section{Conclusions}
%
%We have obtained explicit bounds for the convergence of polymatroids
%associated with algebraic varieties over finite fields towards the
%corresponding algebraic matroid. Our estimates show that the
%discrepancy $|h_\F(\mathcal{K})-r(\mathcal{K})|$ decreases at rate
%$O(q^{-1/2}\ln q)$ as the cardinality $q$ of the field grows.
%
%These results provide a quantitative refinement of the fact that
%algebraic matroids are almost entropic, and suggest further
%connections between matroid theory, algebraic geometry, and
%information theory.

\section{Conclusions}

We established explicit, uniform bounds comparing the entropy
polymatroid $h_\F$ of the evaluation distribution on $V(\F)$ with
the rank function $r$ of the algebraic matroid ${\sf
M}:=(\mathcal{E},r)$ under consideration, where
$V\subset\A^n_{\overline{\G}}$ is the associated variety and $\F$ is
a finite extension of the finite field $\G$ of definition of ${\sf
M}$. In particular, for every $\mathcal{K}\subset\mathcal{E}$,
\[
\bigl|h_\F(\mathcal{K})-r(\mathcal{K})\bigr| \;\le\;
\frac{nm\delta^{2}}{q^{1/2}} \;+\; \frac{nm\ln\delta^2}{\ln q} \;-\;
\frac{\ln\!\bigl(1-5\delta^{7/3}q^{-1/2}\bigr)}{\ln q}.
\]
Hence, as $q:=|\F|\to\infty$, the discrepancy tends to $0$ with rate
$\mathcal{O}(1/\ln q)$, with constants depending only on
$n:=|\mathcal{E}|$ and $\delta:=\deg(V)$. The argument proceeds by
first controlling independent sets, then circuits via a conditional
entropy bound, and finally extending to arbitrary dependent sets by
submodularity.

% ----------------------------------------------------------------
%\bibliographystyle{amsplain}
%\bibliography{auxiliar,finite_fields,refs1}

\begin{thebibliography}{10}

\bibitem{BeMiSa13}
M.~Beecken, J.~Mittmann, and N.~Saxena, \emph{Algebraic independence
and
  blackbox identity testing}, Inf. Comput. \textbf{222} (2013), 2--19.

\bibitem{Beimel11}
A.~Beimel, \emph{Secret-sharing schemes: a survey}, Coding and
cryptology.
  Third international workshop, IWCC 2011, Qingdao, China, May 30 -- June 3,
  2011. Proceedings, Berlin: Springer, 2011, pp.~11--46.

\bibitem{BeFaMo25}
A.~Beimel, O.~Farr\`as, and A.~Moya, \emph{Polynomial secret sharing
schemes
  and algebraic matroids}, Cryptology {ePrint} Archive, Paper 2025/368, 2025.

\bibitem{BenEfraim16}
A.~Ben-Efraim, \emph{Secret-sharing matroids need not be algebraic},
Discrete
  Math. \textbf{339} (2016), no.~8, 2136--2145.

\bibitem{CaMa06}
A.~Cafure and G.~Matera, \emph{Improved explicit estimates on the
number of
  solutions of equations over a finite field}, Finite Fields Appl. \textbf{12}
  (2006), no.~2, 155--185.

\bibitem{Fulton84}
W.~Fulton, \emph{Intersection theory}, Springer, Berlin Heidelberg
New York,
  1984.

\bibitem{Harris92}
J.~Harris, \emph{Algebraic geometry: a first course}, Grad. Texts in
Math.,
  vol. 133, Springer, New York Berlin Heidelberg, 1992.

\bibitem{Heintz83}
J.~Heintz, \emph{{Definability} and fast quantifier elimination in
  algebraically closed fields}, Theoret. Comput. Sci. \textbf{24} (1983),
  no.~3, 239--277.

\bibitem{Kunz85}
E.~Kunz, \emph{Introduction to commutative algebra and algebraic
geometry},
  Birkh{\"a}user, Boston, 1985.

\bibitem{Matus99}
F.~Mat\'u$\check{\textrm{s}}$, \emph{Matroid representations by
partitions},
  Discrete Math. \textbf{203} (1999), no.~1-3, 169--194.

\bibitem{Matus24}
\bysame, \emph{Algebraic matroids are almost entropic}, Proc. Amer.
Math. Soc.
  (2024), 1--6.

\bibitem{Oxley11}
J.~Oxley, \emph{Matroid theory}, second ed., Oxf. Grad. Texts Math.,
vol.~21,
  Oxford Univ. Press, Oxford, 2011.

\bibitem{RoSiTh20}
Z.~Rosen, J.~Sidman, and L.~Theran, \emph{Algebraic matroids in
action}, Amer.
  Math. Monthly \textbf{127} (2020), no.~3, 199--216.

\bibitem{Schost03}
E.~Schost, \emph{Computing parametric geometric resolutions}, Appl.
Algebra
  Engrg. Comm. Comput. \textbf{13} (2003), 349--393.

\bibitem{Shafarevich94}
I.R. Shafarevich, \emph{Basic algebraic geometry: {Varieties} in
projective
  space}, Springer, Berlin Heidelberg New York, 1994.

\bibitem{Shamir79}
A.~Shamir, \emph{How to share a secret}, Commun. ACM \textbf{22}
(1979),
  612--613.

\bibitem{Yeung08}
Raymond~W. Yeung, \emph{Information theory and network coding},
Information
  Technology: Transmission, Processing and Storage, Springer, 2008.

\end{thebibliography}

\begin{thebibliography}{10}

@article{Matus24,
  author    = {F. Matuš},
  title     = {Algebraic matroids are almost entropic},
  journal   = {Proc. Amer. Math. Soc.},
  volumen   = {152},
  year      = {2024},
  pages     = {1--6}
}

@book{Oxley11,
  author    = {J. Oxley},
  title     = {Matroid Theory},
  series    = {Oxf. Grad. Texts Math.},
  volume    = {21},
  edition   = {Second},
  publisher = {Oxford Univ. Press},
  year      = {2011},
  address   = {Oxford},
  isbn      = {9780199603398}
}

\end{thebibliography}

\providecommand{\bysame}{\leavevmode\hbox
to3em{\hrulefill}\thinspace}
\providecommand{\MR}{\relax\ifhmode\unskip\space\fi MR }
% \MRhref is called by the amsart/book/proc definition of \MR.
\providecommand{\MRhref}[2]{%
  \href{http://www.ams.org/mathscinet-getitem?mr=#1}{#2}
} \providecommand{\href}[2]{#2}

\end{document}

\end{document}
% ----------------------------------------------------------------